\newtheorem{thm}{Theorem}
\newtheorem{lem}{Lemma}
\newtheorem{defn}{Definition}
\newcommand{\expect}[1]{\mathbb{E}\left\{#1\right\}}
\newcommand{\defequiv}{\mbox{\raisebox{-.3ex}{$\overset{\vartriangle}{=}$}}}
\newcommand{\bv}[1]{{\boldsymbol{#1} }}
\newcommand{\script}[1]{{{\cal{#1} }}}
\newcommand{\dmax}{{\delta}}
\title{Dynamic Data Compression with Distortion Constraints 
 for Wireless Transmission \\ over a Fading Channel}
\author{Michael J. Neely  ,  Abhishek Sharma}
\begin{abstract}
We consider a wireless node that randomly receives data from
different
sensor units.    The arriving data must be compressed, stored, and
transmitted over a wireless link, where both the compression and transmission
operations consume power.  Specifically, the controller must choose from
one of multiple compression options every timeslot. 
Each option requires a different amount of
power and has different
compression ratio properties.
Further, the wireless link has potentially
time-varying channels, and 
transmission rates
depend on current channel states and transmission power allocations.
We design a dynamic algorithm for joint compression
and transmission, and prove that it 
comes arbitrarily close to minimizing
average power expenditure,
with an explicit tradeoff in average delay.  
Our approach uses stochastic network optimization
together with  a concept of
\emph{place holder bits} to provide efficient  energy-delay performance. 
The algorithm is simple to implement and does 
not require knowledge of
probability distributions for packet arrivals or channel states.
Extensions that treat 
distortion constraints are also considered. 
\end{abstract}
\begin{document} 

\begin{bottomstuff} 
This work was presented in part as an invited paper at the Conference on 
Information Sciences and Systems (CISS), 
Princeton, NJ, March 2008 \cite{dynamic-compression-ciss08}.

Michael J. Neely and Abhishek Sharma are with the  
Electrical Engineering Department and the Computer Science Department, respectively, 
at the University of Southern California, Los Angeles, CA (web: http://www-rcf.usc.edu/$\sim$mjneely).

This material is supported in part  by one or more of
the following: the DARPA IT-MANET program
grant W911NF-07-0028, the NSF grant OCE-0520324, the NSF Career grant CCF-0747525.
\end{bottomstuff}

\maketitle

\nocite{dynamic-compression-ciss08} 

\section{Introduction}

We consider the problem of energy-aware data compression and
transmission for a wireless link that receives data from $N$ different
sensor units (Fig. \ref{fig:data-fusion1}).  Time is slotted with normalized
slot durations $t \in \{0, 1, 2, \ldots\}$,
and every timeslot the link receives a packet from a random number of the sensors.
We assume that packets arriving on the same timeslot contain correlated data, and
that this data can be compressed
using one of multiple compression options.   However, the signal processing
required for compression consumes a significant amount of energy,
and more sophisticated compression algorithms  are also more energy expensive.
 Further, the data must be transmitted over a wireless channel with potentially
varying channel conditions, where the transmission rates available on the current
timeslot depend on the current channel condition and the current transmission power
allocation.   The goal is to design a joint compression and transmission scheduling policy
that minimizes time average power expenditure.

This problem is important for modern sensor networks where
correlated (and compressible) data flows over power limited nodes.
Compressing the data can save power by reducing the amount of bits
that need to be transmitted, provided that the transmission power
saved is more than the power expended in the compression operation.
It is important to understand the optimal balance between
compression power and transmission power.  Work in 
\cite{energy-aware-compression} considers this question for a 
wireless link with fixed transmission costs, and describes practical compression issues
and reports communication-to-computation energy ratios  for popular algorithms.
Work in \cite{sadler-sensys} considers a similar static situation
where  the wireless channel
condition is the same for all time. There, it is shown experimentally that
compression can lead to a significant power savings when data is
transmitted over multiple hops. The proposed algorithm of
\cite{sadler-sensys} uses a fixed data compression scheme, an
adaptation of the Lempel-Ziv-Welch (LZW) compression algorithm for
sensor networks.   Techniques for distributed compression 
using Slepian-Wolf coding theory are considered in \cite{julius-distributed-compression}
\cite{networked-slepian-wolf}. 
Models of spatial correlation between data of different sensors
are proposed in \cite{pattem-correlation-model-v-distance-for-compression}
and used to construct and evaluate 
energy-efficient routing algorithms that compress data at each stage.

The above prior work has concentrated on static environments 
where transmission power is directly proportional to the number of bits
transmitted and/or traffic rates are fixed and known, so that 
compression and transmission strategies
can be designed in advance.
Here, we focus attention on a single link, but consider a stochastic
environment where the amount of data received every slot
is random, as is the current channel
condition for wireless transmission.  
Further, the transmission rate 
is an arbitrary (possibly non-linear) function of transmission power. 
Optimal policies in this stochastic context are
more complex,  and more care is required to 
ensure transmissions are energy-efficient. 

\begin{figure}[top]
  \centering
   \includegraphics[height=2in, width=3in]{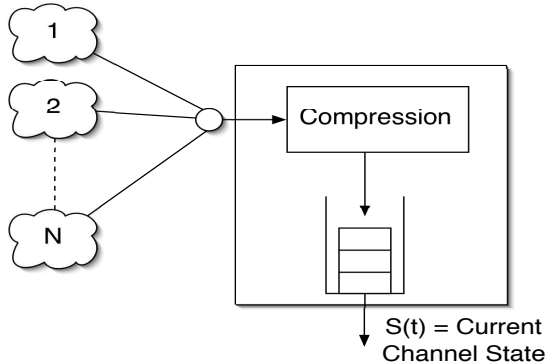} 
  \caption{Multiple sensors sending data to a single wireless link.}
  \label{fig:data-fusion1}
\end{figure}

In this paper, we design a dynamic compression and transmission scheduling
algorithm and prove that the algorithm pushes total time average power arbitrarily
close to optimal, with a corresponding tradeoff in average delay.  We assume the
algorithm has a table of expected compression ratios for each compression option,
and that, if channels are time-varying,  current channel state information is available.
Our algorithm bases decisions purely on this information and does not require a-priori
knowledge of the packet arrival or channel state probabilities.  The algorithm is
simple  to implement and is robust to situations where these probabilities can change.
 This work is important as
it demonstrates a principled method of making on-line compression decisions in a
stochastic system with correlated data.  Our solution applies the techniques of
Lyapunov optimization developed in our previous
work \cite{neely-energy-it} \cite{now}, and is perhaps the first application
of these techniques to the dynamic compression problem.  This paper also 
extends the general theory by introducing a novel concept of \emph{place-holder bits} 
to improve delay in stochastic networks with costs. 
Related Lyapunov optimization techniques for network flow control applications
are developed in \cite{neely-thesis} \cite{neely-fairness-infocom05}, 
and alternative fluid model approaches are developed in \cite{stolyar-greedy} \cite{atilla-fairness}.

In the next section we describe the system model, and in
Section \ref{section:optimal-definition} we characterize the minimum
average power in terms of an optimization problem based on channel and packet arrival
probabilities.  In Section \ref{section:algorithm} we develop an on-line algorithm
that makes simple decisions based only on current information.  The
algorithm achieves time average power that can be pushed arbitrarily close to optimum via a
simple control parameter that also affects an average delay tradeoff.  A simple  improvement 
via \emph{place-holder bits} is developed in Section \ref{section:improvement}.
Extensions to systems 
with distortion  constraints are given in Section \ref{section:distortion}.  Simulations are 
provided in Section \ref{section:simulation}.

\section{System Model} \label{section:formulation}

Consider the wireless link of Fig. \ref{fig:data-fusion1} that operates in slotted time and
receives packets from $N$
different sensor units.  If an individual sensor sends data during a timeslot,
this data is in the form of a fixed length packet of size $b$ bits, containing sensed information.
Let $A(t)$ represent the number of sensors that send packets during slot $t$, so that
$A(t) \in \{0, 1, \ldots, N\}$.  The data from these $A(t)$ packets may be correlated,
and hence it
may be possible to compress the information within the $A(t)$ packets
(consisting of $A(t)b$ bits)
into a smaller data unit for transmission
over the wireless link.  This is done via a \emph{compression function} $\Psi(a, k)$ defined
as follows.
There are $K+1$ compression options, comprising a set
$\script{K} = \{0, 1,\ldots, K\}$. Option $0$ represents no attempted
compression, and options $\{1, 2, \ldots, K\}$ represent
various alternative methods to compress the data.  The function
$\Psi(a,k)$ takes input $a \in \{0, 1, \ldots, N\}$ (representing the number of newly arriving
packets) and compression option $k \in \script{K}$, and generates a \emph{random
variable} output $R$, representing the total size of the data after compression.

 Every
timeslot the link controller observes the random number of new packet arrivals $A(t)$
and chooses a compression option $k(t) \in \script{K}$, yielding the random
compressed output $R(t) = \Psi(A(t), k(t))$.  Let $P_{comp}(t)$ represent the power expended
by this compression operation, and assume this is also a random function of the
number of packets compressed and the compression option.   We assume that the compressed
output  $R(t)$ 
is conditionally i.i.d. over all slots that have the same number of packet arrivals $A(t)$ and the same
compression decision $k(t)$.  Likewise, compression power $P_{comp}(t)$ is conditionally i.i.d. 
over all slots with the same $A(t)$ and $k(t)$. 
The average compressed
output $m(a,k)$ and the average power expenditure $\phi(a,k)$ associated with
$A(t) = a, k(t) = k$ are defined:
\begin{eqnarray}
m(a, k) &=& \expect{\Psi(A(t), k(t)) \left|\right. A(t) = a, k(t) = k} \label{eq:m-table} \\
\phi(a, k) &=& \expect{P_{comp}(t) \left|\right. A(t) = a, k(t) = k} \label{eq:phi-table}
\end{eqnarray}
We assume the values of  $m(a,k)$ and  $\phi(a,k)$ are known so that the following
table can be constructed:

\begin{figure}[cht]
\centering
\begin{tabular}{|c | c | c | c |}
\hline
 $k$&$\Psi(a,k)$&$\expect{\Psi(a,k)}$ &$\expect{P_{comp} \left|\right. a,  k}$ \\ \hline
 0  & $ab$&$ab$&$\phi(a,0) = 0$\\ \hline
  1  & Random & $m(a,1)$&$\phi(a, 1)$\\ \hline
   2 & Random & $m(a,2)$&$\phi(a, 2)$ \\ \hline
      $\cdots$ & $\cdots$& $\cdots$&$\cdots$ \\ \hline
   $K$& Random & $m(a,K)$&$\phi(a,K)$\\ \hline
 \end{tabular}
 \label{fig:table-fusion}
\end{figure}

Note that we assume $\Psi(a, 0) = ab$ and $\phi(a,0) = 0$, as the compression option
$k=0$ does not compress any data and also does not expend any power.
We further assume that $m(a,k) \leq ab$ for all $a \in \{0, 1, \ldots, N\}$ and
all $k\in \script{K}$, so that compression is not expected to expand
the data.

\subsection{Data Transmission and Queueing}

The compressed data $R(t) = \Psi(A(t), k(t))$ is delivered to a queueing buffer for transmission over the
wireless link (see Fig. \ref{fig:data-fusion1}). Let $U(t)$ represent the current
number of bits (or \emph{unfinished work})  in the queue. The queue
backlog evolution is given by:
\begin{equation} \label{eq:queue-dynamics}
U(t+1) = \max[U(t) - \mu(t), 0] + R(t)
\end{equation}
where $\mu(t)$ is the transmission rate offered by the link on slot $t$. This rate
is determined by the current channel condition and the current transmission power allocation
decision, as in \cite{now}.
Specifically, the channel is assumed to be constant over the duration of a slot, but
can potentially change from slot to slot. Let $S(t)$ represent the
current channel state, which is assumed to take values in some finite set $\script{S}$.
We assume the channel state $S(t)$ is known at the beginning of each slot $t$, so that
the link can make an opportunistic transmission power allocation decision $P_{tran}(t)$,
yielding a transmission rate $\mu(t)$ given by:
\[ \mu(t) = C(P_{tran}(t), S(t)) \]
where $C(P, s)$ is the \emph{rate-power} curve associated with the modulation and
coding schemes used  for transmission over the channel.  We assume $C(P, s)$ is 
continuous in power $P$ for each channel state $s \in \script{S}$. Transmission  
power allocations $P(t)$ are restricted to some compact set $\script{P}$ for all slots $t$, 
where $\script{P}$ contains a
maximum transmission power $P_{max}$.   For example, the set $\script{P}$ can contain a
discrete set of power levels, such as the
two element set
$\script{P} = \{0, P_{max}\}$.  Alternatively, $\script{P}$  can be a continuous interval,
such as $\script{P} = \left\{ P \left|\right. 0 \leq P \leq P_{max}\right\}$.  We assume throughout
that $0 \in \script{P}$
and that $C(0, s) = 0$ for all channel states $s \in \script{S}$, so that zero
transmission power yields a zero transmission rate.  Further, we assume that
$C(P_{max}, s) \geq C(P, s)$ for all $s \in \script{S}$ and all $P \in \script{P}$, so that
allocating maximum power yields the largest transmission rate that is
possible under the given channel state.


\subsection{Stochastic Assumptions and the Control Objective}

For simplicity, we assume the packet arrival process $A(t)$ is i.i.d. over slots with
a general probability distribution $p_A(a) = Pr[A(t) = a]$.  Likewise, the
channel state process $S(t)$ is i.i.d.
over slots with a general distribution $\pi_s = Pr[S(t) = s]$.\footnote{Using the $T$-slot
Lyapunov drift techniques described in \cite{now}, our analysis
can be generalized to show that the \emph{same} algorithms we derive under the i.i.d.
assumption yield  similar performance for arbitrary ergodic arrival and channel processes
$A(t)$ and  $S(t)$, with
delay bounds that increase by  a constant factor related to the mixing times of the processes.}
The distributions $p_A(a)$ and $\pi_s$ are not necessarily known to the link controller.
Every slot the link controller observes the number of new packets $A(t)$, the current
queue backlog $U(t)$,
and the current channel state $S(t)$, and makes a compression decision $k(t) \in \script{K}$
(expending power $P_{comp}(t)$)
and a transmission power allocation $P_{tran}(t) \in \script{P}$.  The total time average power expenditure
is given by:
\[ \lim_{t\rightarrow\infty} \frac{1}{t} \sum_{\tau=0}^{t-1} [P_{comp}(\tau) + P_{tran}(\tau)] \]
The goal is to make compression and transmission decisions to minimize time average power
while ensuring the queue $U(t)$ is stable.  Formally, we define a queueing process
$U(t)$ to be stable if:
\[ \limsup_{t\rightarrow\infty} \frac{1}{t}\sum_{\tau=0}^{t-1} \expect{U(\tau)} < \infty \]
This type of stability is often referred to as \emph{strong stability}, as it implies a finite average
backlog and hence a finite average delay.  In Section \ref{section:algorithm}, we shall design a class
of dynamic
algorithms that can drive time average power arbitrarily close to the minimum average power
required for stability, with a corresponding explicit tradeoff in average queue backlog and average
delay.

Define $r_{min}$ and $r_{max}$ as follows:
\begin{eqnarray}
 r_{min} &\defequiv& \expect{\min_{k \in \script{K}} m(A(t), k)} \label{eq:rmin} \\
 r_{max} &\defequiv& \expect{C(P_{max}, S(t))} \label{eq:rmax}
 \end{eqnarray}
 where the expectations are taken over the randomness of $A(t)$ and $S(t)$ via the
 distributions $p_A(a)$ and $\pi_s$.
Thus, $r_{min}$ is the minimum average bit rate delivered to the queueing system (in units
of bits/slot), assuming
the compression option that results in the largest expected bit reduction is used every slot.
The value
$r_{max}$ represents the maximum possible average transmission rate over the wireless link.
We assume throughout
that $r_{min} < r_{max}$, so that it is possible to stabilize the system.

Thus, there are two reasons to compress data:  (i) In order to
stabilize the queue, we may need to compress (particularly if $\expect{A(t)}b > r_{max}$).
(ii) We may actually save power if the power used to compress is less than the extra amount of power that
would be used transmitting the extra data if it were not compressed.

\subsection{Discussion of the System Model}

This simple model captures a wide class of systems where data compression
is important.  The $N$ sensor scenario of Fig. \ref{fig:data-fusion1} captures
the possibility of randomly arriving data that is \emph{spatially correlated}.
An example is when
there are multiple sensors in an environment and only a random subset of them
detect a particular event.  The data provided by these sensors
is thus correlated but not necessarily identical, as each observation
can offer new information.

The case of compression because of
\emph{time correlated} data can also be treated in this model by re-defining $N$ to
represent the time over which a frame of data samples are gathered.  Indeed, suppose
a timeslot $t$ is composed of $N$ mini-slots, where data can arrive on any or all of
the mini-slots.  The value of $A(t)$ now represents the  random number
of packets arriving over the $N$ mini-slots, and the compression
functions $m(a,k)$ and $\phi(a,k)$ now represent averages associated with compressing
the time-correlated data.  This of course assumes compression is contained to data
arriving within the same frame, and does not treat inter-frame compression.

Our time-varying channel model is useful for systems with mobility, environmental
changes, or restrictions that create time-varying transmission opportunities. 
This allows for opportunistic scheduling which can help to further reduce power
expenditure. We do not consider the additional power required to measure the
channel conditions here.  Extensions that 
treat this issue can likely be obtained using the techniques for optimizing
measurement decisions developed in 
\cite{chih-ping-channel-measure}.
A special case of the time-varying channel model is the \emph{static channel} assumption,
where $S(t)$ is the same for all timeslots $t$.  This special case is similar to the static assumption
in \cite{energy-aware-compression} \cite{sadler-sensys}. 
However, this static channel scenario still creates an 
interesting problem that is much
different from \cite{energy-aware-compression} \cite{sadler-sensys}.  Indeed, 
the random packet arrivals (with raw data rate that is possibly larger than link capacity) 
and the potentially non-linear rate-power curve
necessitate a 
dynamic compression
strategy that is not obvious, that depends on the packet arrival distribution,  
and that does not necessarily use the same compression 
option on every slot.

Here we assume that the compression options available within the set $\script{K}$
are sufficient to
ensure that the resulting data transmitted over the link has an acceptable
fidelity.  An example is \emph{lossless} data compression, such as Huffman
or Lempel-Ziv source coding, where all original data packets can be reconstructed
at the destination.  Alternatively, we might have some compression options
$k \in \script{K}$ representing \emph{lossy} compression, provided that the
distortion that may be introduced is acceptable.   Extensions to systems that 
explicitly consider distortion due to lossy compression 
are considered in Section \ref{section:distortion}.

\section{Minimum Average Power} \label{section:optimal-definition}

Here we characterize the minimum time average power
required for queue stability.  We first define separate
functions $h^*(r)$ and $g^*(r)$ that describe the minimum average
power for compression and transmission, respectively, over a restricted
class of stationary randomized algorithms.  These functions depend on the
steady state arrival and channel distributions $p_A(a)$ and $\pi_s$.
We then show that these
functions can be used to define system optimality
over the class of all possible decision strategies, including strategies that
do not necessarily make stationary and randomized decisions.

\subsection{The  Functions $h^*(r)$ and $g^*(r)$}
\begin{defn} \label{def:1}  For any value $r$ such that $r_{min} \leq r \leq b\expect{A(t)}$,
the \emph{minimum-power compression function} $h^*(r)$ is defined
as the \emph{infimum value} $h$ for which there exist probabilities $(\gamma_{a,k})$
for $a \in \{0, 1, \ldots, N\}$, $k \in \script{K}$,
such that the following constraints are satisfied:
\begin{eqnarray}
\sum_{a=0}^N \sum_{k=1}^{K} p_A(a) \gamma_{a,k} \phi(a, k) = h \label{eq:statphi} \\
\sum_{a=0}^N \sum_{k=1}^K p_A(a) \gamma_{a,k} m(a,k) \leq  r \label{eq:statm} \\
\gamma_{a,k} \geq 0  \: \:   \mbox{ for all $a , k$} \label{eq:gamma1} \\
\sum_{k=1}^K \gamma_{a,k} = 1 \: \:   \mbox{ for all $a$}  \label{eq:gamma2}
\end{eqnarray}
\end{defn}

Intuitively, the $(\gamma_{a,k})$ values define a stationary randomized policy that
observes the current arrivals $A(t)$ and uses compression option $k$ with probability
$\gamma_{a,k}$ whenever $A(t)=a$.  The expression on the left hand side of
(\ref{eq:statphi}) is the expected compression
power $\expect{P_{comp}(t)}$ for this policy.  Likewise, the expression on the left
hand side of (\ref{eq:statm}) is the expected number of bits $\expect{R(t)}$ at the output
of the compressor for this policy.  The value of $h^*(r)$ is thus the smallest possible
average power due to compression, infimized over all such stationary randomized
policies that yield  $\expect{R(t)} \leq r$.  Note from
(\ref{eq:rmin}) that it
is possible to have a stationary randomized policy that yields $\expect{R(t)} = r_{min}$,
and hence the function $h^*(r)$ is well defined for
any $r \geq r_{min}$.  Further, the following lemma shows that 
the infimum value $h^*(r)$ 
can be \emph{achieved} by a particular stationary randomized algorithm.

\begin{lem} \label{lem:exist-particular}
For any $r$ such that $r_{min} \leq r \leq b\expect{A(t)}$, there exists a particular
stationary randomized policy that makes compression decisions $k^*(t)$ as a random function of the
observed $A(t)$ value (and independent of queue backlog), such that:
\begin{eqnarray}
 \expect{\phi(A(t), k^*(t))} &=& h^*(r)  \label{eq:stat-h}\\
\expect{m(A(t), k^*(t))} &=& r \label{eq:stat-r}
\end{eqnarray}
where  the above expectations are taken with respect to the steady state packet arrival
distribution $p_A(a)$ and the randomized compression decisions $k^*(t)$.
\end{lem}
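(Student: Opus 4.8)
The plan is to observe that the infimum defining $h^*(r)$ is the minimum of a continuous function over a nonempty compact set, so it is attained by some stationary randomized policy, and then to perform a small convex‑combination adjustment that converts the bit‑rate inequality (\ref{eq:statm}) into the equality (\ref{eq:stat-r}) while keeping the compression power at $h^*(r)$.

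First I would identify a stationary randomized policy with its probability vector $(\gamma_{a,k})$. The constraints (\ref{eq:gamma1})--(\ref{eq:gamma2}) confine this vector to a product of probability simplices, a closed and bounded set, and intersecting with the single linear inequality (\ref{eq:statm}) preserves closedness and boundedness, so the feasible set is compact. It is nonempty: by (\ref{eq:rmin}), the policy that, given $A(t)=a$, deterministically selects an option $k$ minimizing $m(a,k)$ has $\expect{m(A(t),k(t))}=r_{min}\le r$ and hence satisfies (\ref{eq:statm}). The objective $(\gamma_{a,k})\mapsto \sum_{a}\sum_{k} p_A(a)\gamma_{a,k}\phi(a,k)$ appearing in (\ref{eq:statphi}) is linear, hence continuous, and is bounded below by $0$ since $\phi(a,k)\ge 0$. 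By the Weierstrass extreme value theorem the infimum $h^*(r)$ is attained at some feasible $(\gamma^*_{a,k})$. Letting $k^*(t)$ select option $k$ with probability $\gamma^*_{a,k}$ whenever $A(t)=a$ (and independently of backlog) yields a policy with $\expect{\phi(A(t),k^*(t))}=h^*(r)$, which is (\ref{eq:stat-h}), and $\expect{m(A(t),k^*(t))}=\bar r$ for some $\bar r\le r$.

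It then remains to upgrade $\bar r\le r$ to $\bar r=r$. If $\bar r=r$ we are done; otherwise $\bar r<r\le b\expect{A(t)}$. Let $\gamma^{(0)}$ be the no‑compression policy (always option $0$): it has output bit rate $\expect{m(A(t),0)}=b\expect{A(t)}$, and since $\phi(a,0)=0$ it expends zero compression power. For $\lambda\in[0,1]$ let $\gamma(\lambda)$ run $(\gamma^*_{a,k})$ with probability $\lambda$ and $\gamma^{(0)}$ with probability $1-\lambda$; this is again a valid choice of decision probabilities, with expected output bit rate $\lambda\bar r+(1-\lambda)b\expect{A(t)}$, a continuous function of $\lambda$ that sweeps the interval $[\bar r,\,b\expect{A(t)}]$, which contains $r$. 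Pick $\lambda_0$ for which this equals $r$ exactly; then $\gamma(\lambda_0)$ satisfies all constraints (\ref{eq:statm})--(\ref{eq:gamma2}). Its expected compression power is $\lambda_0 h^*(r)+(1-\lambda_0)\cdot 0=\lambda_0 h^*(r)\le h^*(r)$, while feasibility of $\gamma(\lambda_0)$ forces this power to be $\ge h^*(r)$; hence it equals $h^*(r)$. Thus $\gamma(\lambda_0)$ achieves both (\ref{eq:stat-h}) and (\ref{eq:stat-r}).

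The main obstacle is exactly this last step, the passage from ``$\le r$'' to ``$=r$''; the compactness/continuity argument giving (\ref{eq:stat-h}) is routine. One must rule out the possibility that every minimizer strictly undershoots $r$, and the clean resolution is that the no‑compression policy costs no power, so mixing toward it cannot raise the power above $h^*(r)$, while optimality of $h^*(r)$ forbids it from dropping below --- pinning the mixture's power exactly at $h^*(r)$. Equivalently, one can argue directly that any minimizer with $h^*(r)>0$ must already satisfy $\bar r=r$, and dispose of the case $h^*(r)=0$ by the same convex combination. A minor bookkeeping point is to read all expectations in (\ref{eq:stat-h})--(\ref{eq:stat-r}) as averages over $p_A(a)$ together with the randomization $(\gamma^*_{a,k})$, so that they coincide with the left‑hand sides of (\ref{eq:statphi})--(\ref{eq:statm}).
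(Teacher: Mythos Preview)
Your proof is correct and follows essentially the same approach as the paper. The paper phrases the attainment step via sequential compactness (extract a convergent subsequence of a minimizing sequence of probability vectors), whereas you invoke the Weierstrass extreme value theorem directly; both then use the identical trick of mixing with the zero-cost no-compression policy to upgrade the rate inequality to an equality while pinning the power at $h^*(r)$.
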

\begin{proof}
The proof follows by continuity of the functions on the left hand side of (\ref{eq:statphi})
and (\ref{eq:statm}) with respect to $\gamma_{a,k}$, and by compactness of the 
set of all $(\gamma_{a,k})$ that satisfy  (\ref{eq:gamma1}) and (\ref{eq:gamma2}). 
See Appendix A for details.  
\end{proof}

Similar to the function $h^*(r)$, we define $g^*(r)$ as the smallest possible average transmission
power required for a stationary randomized algorithm
to support a transmission rate of at least $r$.  The precise definition is given below.

\begin{defn}  \label{def:2} For any value $r$ such that $0 \leq r \leq r_{max}$, the
\emph{minimum-power transmission function} $g^*(r)$ is defined as the \emph{infimum
value} $g$ for which there exists a stationary randomized power allocation policy that
chooses transmission power $P_{tran}(t)$ as a random function of the observed channel state $S(t)$
(and independent of current queue backlog), such that:
\begin{eqnarray}
\expect{P_{tran}(t)} &=& g  \label{eq:g1} \\
\expect{C(P_{tran}(t), S(t))} &\geq&  r \label{eq:g2}
\end{eqnarray}
\end{defn}

The function $g^*(r)$ is well defined whenever $r \leq r_{max}$ because
it is possible to satisfy the constraint (\ref{eq:g2}).
Indeed, note by (\ref{eq:rmax}) that the policy $P_{tran}(t) = P_{max}$ for all $t$ yields
$\expect{C(P_{tran}(t), S(t))} = r_{max}$.    Furthermore, it is easy to show that
the inequality constraint in (\ref{eq:g2}) can be replaced by an equality constraint,
as any  policy with an average transmission rate larger than $r$ can be modified
to achieve rate $r$ exactly while using strictly less power.  This can be done by
independently setting $P_{tran}(t)=0$ with some probability every slot, yielding
a zero transmission rate in that slot.

Because the set $\script{P}$ is compact and the 
function $C(P,s)$ is continuous in power $P$ for all 
channel states $s \in \script{S}$, 
an argument similar to the proof of Lemma \ref{lem:exist-particular} can be used to show
that  the infimum average power $g^*(r)$ can be \emph{achieved}
by a particular stationary randomized policy.\footnote{More generally,
the infimum can be achieved
whenever
$C(P, s)$ is \emph{upper semi-continuous} in $P$ for every channel 
state $s \in \script{S}$ (see
\cite{bertsekas-convex} for a definition).
The upper semi-continuity property is a mild property that is true of every practical curve
$C(P,s)$. All results of this paper hold when continuity is replaced by upper 
semi-continuity.}  Specifically, for any $r$ such that $0 \leq r \leq r_{max}$, 
there exists a stationary randomized algorithm that chooses transmission power
$P_{tran}^*(t)$ that yields:
\begin{eqnarray}
\expect{C(P_{tran}^*(t), S(t))} &=& r \label{eq:stat-tran-r} \\
\expect{P_{tran}^*(t)} &=& g^*(r) \label{eq:stat-tran-g}
\end{eqnarray}

\subsection{Structural Properties of $h^*(r)$ and $g^*(r)$}
It is not difficult to show that  $h^*(r)$ is a non-increasing function of $r$ (because
less compression power is required if a larger compressor output rate is allowed), and
that $g^*(r)$ is a non-decreasing  function of $r$ (because more transmission power
is required to support a larger transmission rate). Further, both functions are convex.
It is interesting to note that in
the special case when there is
no channel state variation so that $C(P, s) = C(P)$, and when the function $C(P)$ is
strictly increasing and concave, then $g^*(r) = C^{-1}(r)$, i.e., it is given by the inverse
of $C(P)$.  Details on the structure of the $g^*(r)$ function in the general time-varying
case are given in \cite{neely-energy-it}.

\subsection{Minimum Average Power for Stability}

The following theorem establishes the minimum time average power required for
queue stability in terms of the $h^*(r)$ and $g^*(r)$ functions.  We consider all
possible algorithms for making compression decisions $k(t) \in \script{K}$ and
transmission power decisions $P_{tran}(t) \in \script{P}$ over time, including algorithms
that are not necessarily in the class of stationary randomized policies.

\begin{thm} \label{thm:min-power}  Let $A(t)$ and $S(t)$ be ergodic with steady
state distributions $p_A(a)$ and $\pi_s$, respectively (such as processes that
are i.i.d. over slots, or more general Markov modulated processes).  Assume
that $r_{min} < r_{max}$ (defined in (\ref{eq:rmin}), (\ref{eq:rmax})). Then any joint compression
and transmission rate scheduling algorithm that stabilizes the queue $U(t)$ yields a
time average power expenditure that satisfies:
\begin{eqnarray*}
\limsup_{t\rightarrow\infty} \frac{1}{t} \sum_{\tau=0}^{t-1} \expect{P_{comp}(\tau) + P_{tran}(\tau)} \geq P_{av}^*
\end{eqnarray*}
where $P_{av}^*$ is defined as the optimal solution to the following problem:
\begin{eqnarray}
\mbox{Minimize:} &  h^*(r) + g^*(r)   \label{eq:min-h-and-g}\\
\mbox{Subject to:} & r_{min} \leq r \leq \min[r_{max}, b\expect{A(t)}] \label{eq:bounds-h-and-g}
\end{eqnarray}
\end{thm}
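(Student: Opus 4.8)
The plan is to establish the bound by a standard converse argument based on time-averaging the queue dynamics together with Jensen's inequality applied to the convex functions $h^*(r)$ and $g^*(r)$. First I would fix any algorithm that stabilizes $U(t)$, and without loss of generality restrict attention to the case where the $\liminf$ of the time-average expected power is finite (otherwise the bound holds trivially). Summing the queue dynamics (\ref{eq:queue-dynamics}) over $\tau \in \{0,\ldots,t-1\}$ and using $U(t+1) \geq U(t) - \mu(t) + R(t)$, telescoping gives $\frac{1}{t}\sum_{\tau} \expect{R(\tau)} \leq \frac{1}{t}\sum_{\tau}\expect{\mu(\tau)} + \expect{U(0)}/t$. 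Stability of $U(t)$ plus the bound $R(t) \leq Nb$ and $\mu(t) \leq C(P_{max},S(t))$ then forces the long-run average input rate to be at most the long-run average service rate; along a subsequence $t_j$ on which all the relevant time-averages converge (extract such a subsequence using boundedness), define $\overline{r} \defequiv \lim_j \frac{1}{t_j}\sum_{\tau}\expect{R(\tau)}$, and we get a matching service-rate average that is at least $\overline{r}$.

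Next I would show $\overline{r}$ lies in the feasible interval $[r_{min}, \min[r_{max}, b\expect{A(t)}]]$. The lower bound $\overline{r} \geq r_{min}$ follows because $R(t) = \Psi(A(t),k(t)) \geq \min_k m(A(t),k)$ only in expectation — more carefully, $\expect{R(t)} = \expect{m(A(t),k(t))} \geq \expect{\min_k m(A(t),k)} = r_{min}$ since $k(t)$ is chosen before observing the compression randomness and conditioning on $A(t)$ gives $\expect{R(t) \mid A(t)=a} \geq \min_k m(a,k)$. The upper bound $\overline{r} \leq b\expect{A(t)}$ follows from $m(a,k)\leq ab$, and $\overline{r} \leq r_{max}$ follows from the service-rate constraint together with $\frac{1}{t_j}\sum_\tau \expect{C(P_{tran}(\tau),S(\tau))} \leq \frac{1}{t_j}\sum_\tau\expect{C(P_{max},S(\tau))} \to r_{max}$ by ergodicity of $S(t)$.

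The heart of the argument is then a Jensen/convexity step. For each slot $\tau$, let $r_c(\tau) \defequiv \expect{m(A(\tau),k(\tau)) \mid \mathcal{H}_\tau}$ be the conditional expected compressor output and observe the algorithm's conditional expected compression power is at least $h^*(r_c(\tau))$ by Definition \ref{def:1} (the conditional distribution of $k(\tau)$ given $A(\tau)$, averaged against $p_A$, is an admissible $(\gamma_{a,k})$; here one needs that decisions given the arrival are a valid randomization, which holds since the controller observes $A(\tau)$). Similarly the conditional expected transmission power is at least $g^*(r_t(\tau))$ where $r_t(\tau)$ is the conditional expected transmission rate, by Definition \ref{def:2}. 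Averaging over $\tau \leq t_j$, using convexity of $h^*$ and $g^*$ (stated in the excerpt) and Jensen's inequality twice, the time-average power is at least $h^*(\overline{r}_c) + g^*(\overline{r}_t)$ for the corresponding averaged rates; since $h^*$ is non-increasing and $g^*$ is non-decreasing, and $\overline{r}_c \geq \overline{r}$ while $\overline{r}_t \geq \overline{r}$ (input does not exceed service in the limit), we get a lower bound of $h^*(\overline{r}) + g^*(\overline{r}) \geq P_{av}^*$ by feasibility of $\overline{r}$ for the optimization (\ref{eq:min-h-and-g})--(\ref{eq:bounds-h-and-g}). Taking $\liminf$ over the original sequence (the subsequence value bounds it from below) completes the proof.

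The main obstacle I anticipate is making the conditioning argument in the Jensen step fully rigorous for a general (non-stationary, history-dependent) policy: one must argue that for each slot the conditional law of the decision given the arrival/channel defines a legitimate element of the feasible set in Definitions \ref{def:1} and \ref{def:2}, handle the ergodic (rather than i.i.d.) case via Cesàro averages of the marginals of $A(\tau)$ and $S(\tau)$ converging to $p_A$ and $\pi_s$, and control the boundary effects from the $\max[\cdot,0]$ in the queue update and from the $U(0)/t$ term. These are routine but fiddly; I would relegate the measure-theoretic bookkeeping to an appendix, as the excerpt does for Lemma \ref{lem:exist-particular}.
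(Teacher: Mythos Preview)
Your approach is correct and takes a genuinely different route from the paper's. The paper never invokes the convexity of $h^*$ and $g^*$ or Jensen's inequality; instead (Appendix~C) it defines the time-averaged conditional probabilities $\gamma_{a,k}(t) \defequiv \frac{1}{t}\sum_{\tau=0}^{t-1} Pr[k(\tau)=k \mid A(\tau)=a]$, observes these lie in the compact simplex (\ref{eq:gamma1})--(\ref{eq:gamma2}) for every $t$, and extracts a convergent subsequence whose limit $(\gamma_{a,k}^*)$ is directly a feasible point for Definition~\ref{def:1} realizing the limiting rate $r$ and compression power $\overline{P}_c$; the bound $\overline{P}_c \geq h^*(r)$ then falls out of the definition of $h^*$ as an infimum. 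The transmission side is handled analogously via Carath\'eodory on the two-dimensional set $\{(C(p,s),p): p \in \script{P}\}$ for each channel state. Both routes arrive at $\overline{P}_c + \overline{P}_t \geq h^*(r) + g^*(\overline{\mu})$ and finish identically using stability ($r \leq \overline{\mu}$) and monotonicity of $g^*$. Your Jensen argument is arguably more direct once convexity is in hand, and the per-slot inequality $\expect{P_{comp}(\tau)\mid \script{H}_\tau} \geq h^*(r_c(\tau))$ is a sharper intermediate statement; the paper's argument is more constructive in that it exhibits an explicit stationary randomized policy matching the limiting averages, and it does not need convexity of $h^*,g^*$ to be established separately. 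One minor cleanup: your final sentence about the $\liminf$ is muddled---the clean fix is to begin, as the paper does, from the subsequence on which the $\limsup$ of total power is attained, then pass to a further subsequence on which the rate averages converge; your inequalities then bound the $\limsup$ directly.
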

\begin{proof}
See Appendix B.
\end{proof}

The above theorem shows that time average power must be greater than or equal
to $P_{av}^*$ for  queue stability.  The result can be understood intuitively by observing
that if $r$ is the rate of bits arriving to the queue from the compressor, then
average transmission power can  be minimized while maintaining stability
by pushing the time average transmission rate down closer and
closer to $r$.   The optimization problem corresponding to this
definition of $P_{av}^*$ may be difficult to solve in practice, as it would require
exact knowledge of the $h^*(r)$ and $g^*(r)$ functions, which in turn requires
full a-priori knowledge  of the distributions $p_A(a)$ and  $\pi_s$.
 In the next section, we design a simple class of dynamic
algorithms that stabilize the queue without this knowledge, and that push time
average power arbitrarily close to $P_{av}^*$.

\section{The Dynamic Compression Algorithm} \label{section:algorithm}

Our dynamic algorithm is decoupled into separate policies for data compression and
transmission rate scheduling.  It is defined in terms of a control parameter $V>0$ that
affects an energy-delay tradeoff.

\emph{The Dynamic Compression and Transmission Algorithm}:

\emph{\underline{Compression}:}  Every slot $t$, observe the number of new packet arrivals
$A(t)$ and the current queue backlog $U(t)$,
and choose compression option $k(t) \in \script{K}$ as follows:
\begin{equation} \label{eq:compress-alg1} 
k(t) = \arg \min_{k \in \script{K}} \left[U(t)m(A(t), k) + V\phi(A(t), k) \right] 
\end{equation}
If there are multiple compression options $k \in \script{K}$ that minimize
$U(t) m(A(t), k)  + V\phi(A(t), k)$, break ties arbitrarily.

\emph{\underline{Transmission}:}   Every slot $t$, observe the current channel
state $S(t)$ and the current queue backlog $U(t)$, and choose transmission
power $P_{tran}(t) \in \script{P}$ as follows:
\begin{equation} \label{eq:transmit-alg1} 
P_{tran}(t) = \arg \max_{P \in \script{P}} \left[U(t)C(P, S(t)) - VP  \right] 
\end{equation} 
Recall that $\script{P}$ is assumed to be compact and the $C(P, s)$ function
is upper semi-continuous, and hence there exists
a maximizing power allocation.  If there are multiple power options that maximize
$U(t)C(P, S(t)) - VP$, break ties arbitrarily.

The compression policy involves a simple comparison of $K+1$ values
found by evaluating the
$m(a,k)$ and $\phi(a,k)$ functions for all $k \in \script{K}$,
and  can easily be accomplished
in real time.   The transmission policy is a special
case of the  Energy Efficient Control Algorithm (EECA)  policy developed
in  \cite{neely-energy-it}, and typically
can also be solved  very simply in real time.  The next theorem establishes
the performance of the combined algorithm.

\begin{thm} \label{thm:performance} (Algorithm Performance)
Suppose packet arrivals $A(t)$ are i.i.d. over slots with
distribution $p_A(a)$, and
channel states $S(t)$ and  are i.i.d. over slots with distribution $\pi_s$.
For any control parameter $V>0$, the dynamic compression and transmission
scheduling algorithm yields power expenditure and queue backlog that satisfy
the following:
\begin{eqnarray}
\overline{P}_{tot}  &\leq&
P_{av}^* + B/V \label{eq:power-bound} \\
\overline{U} &\leq& \frac{B + V(P_{max} + \phi_{max})}{(r_{max} - r_{min})} \label{eq:u-bound}
\end{eqnarray}
where $\overline{P}_{tot}$ and $\overline{U}$ are the time averages for power expenditure and
queue backlog, defined:
\begin{eqnarray*}
\overline{P}_{tot} &\defequiv& \limsup_{t\rightarrow\infty} \frac{1}{t} \sum_{\tau=0}^{t-1} \expect{P_{comp}(\tau) + P_{tran}(\tau)} \\
\overline{U} &\defequiv& \limsup_{t\rightarrow\infty} \frac{1}{t} \sum_{\tau=0}^{t-1} \expect{U(\tau)}
\end{eqnarray*}
and where $B$ and $\phi_{max}$ are constants given by:
\begin{eqnarray}
B &\defequiv& \frac{1}{2}\left[\sigma^2 + \expect{C(P_{max}, S(t))^2} \right] \label{eq:B} \\
\phi_{max} &\defequiv& \expect{\max_{k \in \script{K}}\left[\phi(A(t), k)\right]} \label{eq:phi-max}
\end{eqnarray}
where $\sigma^2$ is an upper bound on $\expect{R(t)^2}$ for all slots $t$.  For example, 
if no compression operation expands the data, then 
$R(t) = \Psi(A(t),k) \leq bA(t)$ for all $t$, and hence $\sigma^2$ is defined: 
\[ \sigma^2 \defequiv b^2\expect{A(t)^2} \]
\end{thm}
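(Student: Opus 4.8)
The plan is to use the standard Lyapunov drift-plus-penalty argument. First I would define the quadratic Lyapunov function $L(U(t)) = \frac{1}{2}U(t)^2$ and compute the one-slot conditional ``drift-plus-penalty'' expression $\expect{L(U(t+1)) - L(U(t)) \mid U(t)} + V\expect{P_{comp}(t) + P_{tran}(t) \mid U(t)}$. Squaring the queue dynamics \eqref{eq:queue-dynamics} and using the inequality $\max[U - \mu, 0]^2 \leq U^2 - 2U\mu + \mu^2$ together with the fact that $R(t)$ and $\mu(t)$ are bounded (so cross terms like $R(t)\max[U(t)-\mu(t),0]$ can be bounded by $U(t)R(t) + $ constant), I would obtain a bound of the form
\begin{eqnarray*}
\Delta(U(t)) + V\expect{P_{comp}(t) + P_{tran}(t) \mid U(t)} \leq B + U(t)\expect{R(t) - \mu(t) \mid U(t)} + V\expect{P_{comp}(t) + P_{tran}(t) \mid U(t)},
\end{eqnarray*}
where $B$ is exactly the constant in \eqref{eq:B}, since $\expect{R(t)^2} \leq \sigma^2$ and $\mu(t)^2 \leq C(P_{max}, S(t))^2$.

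Next I would observe that the dynamic algorithm, in each slot, chooses $k(t)$ and $P_{tran}(t)$ to \emph{minimize} the right-hand side term $U(t)[R(t) - \mu(t)] + V[P_{comp}(t) + P_{tran}(t)]$ over all feasible decisions, when we take expectations conditioned on $U(t)$ and on the observed $A(t)$, $S(t)$. Indeed, \eqref{eq:compress-alg1} minimizes $U(t)m(A(t),k) + V\phi(A(t),k)$ pointwise in the observed $A(t)$, and \eqref{eq:transmit-alg1} maximizes $U(t)C(P,S(t)) - VP$, i.e. minimizes $-U(t)\mu(t) + VP_{tran}(t)$, pointwise in $S(t)$; by the i.i.d.\ assumption and the fact that the $m,\phi$ tables give the relevant conditional expectations, the algorithm minimizes the conditional expectation of the whole bracket. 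Therefore its drift-plus-penalty is no larger than that of \emph{any} other (possibly randomized, queue-independent) policy.

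Then I would plug in the particular stationary randomized policy guaranteed by Lemma \ref{lem:exist-particular} and by \eqref{eq:stat-tran-r}--\eqref{eq:stat-tran-g}, applied at the optimal rate $r = r^*$ achieving $P_{av}^*$ in \eqref{eq:min-h-and-g}--\eqref{eq:bounds-h-and-g}. For that policy (call its decisions $k^*(t)$, $P^*_{tran}(t)$, chosen independent of $U(t)$), we have $\expect{m(A(t),k^*(t))} = r^*$, $\expect{C(P^*_{tran}(t),S(t))} = r^*$, $\expect{\phi(A(t),k^*(t))} = h^*(r^*)$ and $\expect{P^*_{tran}(t)} = g^*(r^*)$, so the comparison-policy bound becomes $\Delta(U(t)) + V\expect{P_{comp}(t)+P_{tran}(t)\mid U(t)} \leq B + U(t)(r^* - r^*) + V P_{av}^* = B + V P_{av}^*$. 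Taking expectations over $U(t)$, summing over $\tau = 0,\ldots,t-1$, using telescoping of the drift and $L(U(0))$ finite, dividing by $Vt$ and letting $t\to\infty$ yields \eqref{eq:power-bound}. For the queue bound \eqref{eq:u-bound}, I would instead compare against a stationary randomized policy that drives the compressor output rate down to $r_{min}$ and the transmission rate up to $r_{max}$ (feasible since $r_{min} < r_{max}$); this makes the $U(t)$-coefficient equal to $-(r_{max}-r_{min}) < 0$, and bounding the penalty term crudely by $V(P_{max} + \phi_{max})$ gives, after telescoping and rearranging, the $\overline{U}$ bound.

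The main obstacle I expect is the bookkeeping in the drift computation — specifically handling the cross term from squaring $U(t+1) = \max[U(t)-\mu(t),0] + R(t)$ cleanly so that the constant that emerges is precisely $B = \frac{1}{2}[\sigma^2 + \expect{C(P_{max},S(t))^2}]$ and not something with extra cross terms like $\expect{R(t)C(P_{max},S(t))}$; the standard trick is to write $U(t+1)^2 \leq (U(t)-\mu(t))^2 + 2R(t)\max[U(t)-\mu(t),0] + R(t)^2$ and then bound $\max[U(t)-\mu(t),0] \leq U(t)$ in the cross term, which discards the unwanted $\mu(t)R(t)$ contribution. A secondary technical point is justifying the interchange that lets the per-slot greedy minimization in the observed $(A(t),S(t))$ imply minimization of the conditional expectation against any queue-independent randomized policy; this is immediate from the i.i.d.\ assumption and the fact that $U(t)$ is a deterministic function of the past, independent of the current $(A(t),S(t),\Psi,P_{comp})$ randomness. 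Everything else is routine telescoping and limits.
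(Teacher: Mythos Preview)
Your proposal is correct and follows essentially the same route as the paper: the paper also uses $L(U)=\tfrac{1}{2}U^2$, derives the drift bound via exactly the squaring trick you describe (Lemma~\ref{lem:compute-drift}), adds the $V$-weighted penalty, observes that the algorithm minimizes the right-hand side, and then compares against the stationary randomized policies of Lemma~\ref{lem:exist-particular} and (\ref{eq:stat-tran-r})--(\ref{eq:stat-tran-g}) with $r_1=r_2=r^*$ for the power bound and $r_1=r_{min},\,r_2=r_{max}$ for the queue bound, concluding via the telescoping Lyapunov Drift Lemma (Lemma~\ref{lem:lyap-drift}). Your anticipated ``obstacle'' in the drift bookkeeping is handled in the paper precisely as you suggest, yielding the constant $B$ with no cross term.
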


We prove Theorem \ref{thm:performance} in the next subsection.  Note that the parameter $V>0$
can be chosen to make $B/V$ arbitrarily small, ensuring by (\ref{eq:power-bound}) that time
average power is arbitrarily close to the optimal value $P_{av}^*$.  However, the resulting
average queue backlog bound
grows linearly with $V$.  By Little's Theorem, the average queue backlog is proportional
to average delay \cite{bertsekas-data-nets}.  This establishes an explicit tradeoff between
average power expenditure 
and delay.

As an implementation detail, we note for simplicity that we can use 
 units of bits, bits/slot, and 
 milli-Watts for $U(t)$, $C(P, S)$, and $P$.  However, these units are arbitrary and
 any consistent units will work, with performance given by (\ref{eq:power-bound}) and 
 (\ref{eq:u-bound}).   
 Indeed, any unit changes are captured in the $V$ constant (where $V$ has units of
 $\mbox{bits}^2/mW$ for the units above).  For example,
 if milli-Watts are changed to Watts, then the algorithm will make the exact same control
 decisions for $k(t)$ and $P_{tran}(t)$ over time, and hence yields the exact same sample
 path of energy use and queue backlog, 
 as long as the $V$ constant is appropriately changed by a factor of $1000$. If 
 bits are changed to kilobits, then $V$ must change by a factor of $10^6$.

\subsection{Lyapunov Performance Analysis for Theorem \ref{thm:performance}}

Our proof of Theorem \ref{thm:performance} 
relies on the performance optimal Lyapunov scheduling techniques
from \cite{now}  \cite{neely-energy-it}.  First define the following
quadratic Lyapunov function of queue backlog $U(t)$:
\[ L(U(t)) \defequiv \frac{1}{2}U(t)^2 \]
Define the one-step conditional Lyapunov drift $\Delta(U(t))$ as
follows:\footnote{More complete
notation would be $\Delta(U(t), t)$, as the drift depends on the scheduling policy which may
also depend on time $t$.  However,  we use the simpler notation $\Delta(U(t))$ as a formal
representation of the right hand side of (\ref{eq:drift-def}).  See \cite{now} for further details
on Lyapunov drift.}
\begin{equation} \label{eq:drift-def}
\Delta(U(t)) \defequiv \expect{L(U(t+1)) - L(U(t))\left|\right. U(t)}
\end{equation}
The following simple lemma from \cite{now} shall be useful.
\begin{lem} \label{lem:lyap-drift}  (Lyapunov drift \cite{now})
Let $L(U(t))$ be a non-negative function of $U(t)$ with
Lyapunov drift $\Delta(U(t))$ defined in (\ref{eq:drift-def}).  If
there are stochastic processes $\alpha(t)$ and $\beta(t)$
such that every slot $t$ and for all possible values of $U(t)$, the conditional Lyapunov drift
satisfies:
\begin{equation}  \label{eq:lyap-cond}
\Delta(U(t)) \leq \expect{\beta(t) - \alpha(t) \left|\right. U(t)}
\end{equation}
then:
\begin{eqnarray*}
\limsup_{t\rightarrow\infty} \frac{1}{t} \sum_{\tau=0}^{t-1} \expect{\alpha(\tau)} \leq \limsup_{t\rightarrow\infty}
\frac{1}{t} \sum_{\tau=0}^{t-1} \expect{\beta(\tau)}
\end{eqnarray*}
\end{lem}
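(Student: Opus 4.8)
The plan is to turn the per-slot \emph{conditional} drift bound into a per-slot \emph{unconditional} bound, telescope a sum of these, and then use the non-negativity of $L$ to absorb the boundary term.

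First I would take the expectation of both sides of (\ref{eq:lyap-cond}) over the distribution of $U(t)$. By the law of iterated expectations, $\expect{\Delta(U(t))} = \expect{L(U(t+1))} - \expect{L(U(t))}$, and the right-hand side of (\ref{eq:lyap-cond}) integrates to $\expect{\beta(t)} - \expect{\alpha(t)}$ (here the processes $\alpha(t)$, $\beta(t)$ are assumed integrable, which holds in every application of interest since they represent bounded powers and rates). Hence for every slot $t$,
\[ \expect{L(U(t+1))} - \expect{L(U(t))} \leq \expect{\beta(t)} - \expect{\alpha(t)} . \]
Summing this over $\tau \in \{0, 1, \ldots, t-1\}$ telescopes the left side, giving
\[ \expect{L(U(t))} - \expect{L(U(0))} \leq \sum_{\tau=0}^{t-1}\expect{\beta(\tau)} - \sum_{\tau=0}^{t-1}\expect{\alpha(\tau)} . \]

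Next, because $L(\cdot)$ is a non-negative function we have $\expect{L(U(t))} \geq 0$, so dropping this term, rearranging, and dividing by $t$ yields
\[ \frac{1}{t}\sum_{\tau=0}^{t-1}\expect{\alpha(\tau)} \leq \frac{1}{t}\sum_{\tau=0}^{t-1}\expect{\beta(\tau)} + \frac{\expect{L(U(0))}}{t} . \]
Finally I would take $\limsup_{t\rightarrow\infty}$ of both sides: since the initial backlog is a fixed finite value we have $\expect{L(U(0))} < \infty$, so $\expect{L(U(0))}/t \to 0$, and subadditivity of $\limsup$ applied to the right-hand side gives the claimed inequality.

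There is no substantive obstacle here; the lemma is a telescoping argument wrapped in a limit. The only points that deserve a word of care are the tower-property interchange (immediate given integrability of $\alpha(t)$ and $\beta(t)$) and the use of $\expect{L(U(t))} \geq 0$ to discard the time-$t$ boundary term together with the vanishing of $\expect{L(U(0))}/t$ — this is precisely where the hypothesis that $L$ is a non-negative function is used. If one wishes to be fully rigorous about the telescoping, one can also note by induction that $\expect{L(U(t))}$ stays finite for all $t$, since $\expect{L(U(0))} < \infty$ and the per-slot inequality propagates finiteness forward.
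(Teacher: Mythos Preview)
Your argument is correct and matches the paper's own proof exactly: the paper's proof sketch says only that one takes expectations of (\ref{eq:lyap-cond}), uses iterated expectations, and sums the resulting telescoping series, which is precisely what you have written out in full. Your additional remarks about integrability, non-negativity of $L$, and the vanishing of $\expect{L(U(0))}/t$ simply make explicit the details the paper leaves to the cited reference.
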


The proof involves taking expectations of (\ref{eq:lyap-cond}), using iterated expectations, and
summing the resulting telescoping series (see \cite{now} for details).

The queue backlog $U(t)$ for our system satisfies the queue evolution equation (\ref{eq:queue-dynamics}).
Specifically, the queue has arrival process $R(t) = \Psi(A(t), k(t))$ and transmission rate process $\mu(t) = C(P_{tran}(t), S(t))$, where the $k(t)$ and $P_{tran}(t)$ control decisions are
determined by the dynamic compression and transmission algorithm of the previous
sub-section.  The Lyapunov drift is given by the following lemma.

\begin{lem} \label{lem:compute-drift} (Computing $\Delta(U(t))$) Under the queue evolution
equation (\ref{eq:queue-dynamics}) and using the quadratic
Lyapunov function $L(U(t)) = \frac{1}{2} U(t)^2$,  the Lyapunov drift $\Delta(U(t))$ satisfies the
following for all $t$ and all $U(t)$:
\begin{eqnarray}
\Delta(U(t)) \leq B - U(t) \expect{\mu(t) - m(A(t), k(t)) \left|\right. U(t)}  \label{eq:compute-drift}
\end{eqnarray}
where $\mu(t) = C(P_{tran}(t), S(t))$, and $B$ is given in (\ref{eq:B}). The expectation above
is taken with respect to the random channels and arrivals $S(t)$ and   $A(t)$, and the potentially
random control actions $k(t)$ and $P_{tran}(t)$.
\end{lem}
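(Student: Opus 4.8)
The plan is to follow the standard template for bounding Lyapunov drift under a queue update of the form $U(t+1) = \max[U(t)-\mu(t),0] + R(t)$. First I would square the queue evolution equation (\ref{eq:queue-dynamics}). Using the elementary facts that $(\max[x,0])^2 \le x^2$ for all real $x$ and that $\max[U(t)-\mu(t),0] \le U(t)$, expanding the square of the right-hand side gives the sample-path bound
\[ U(t+1)^2 \le U(t)^2 + \mu(t)^2 + R(t)^2 - 2U(t)\left(\mu(t) - R(t)\right). \]
Dividing by $2$ and recalling $L(U(t)) = \tfrac12 U(t)^2$ yields a pointwise bound on $L(U(t+1)) - L(U(t))$ in terms of $\mu(t)$, $R(t)$, and $U(t)$.

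Next I would take the conditional expectation given $U(t)$, which by the definition (\ref{eq:drift-def}) gives
\[ \Delta(U(t)) \le \tfrac12 \expect{\mu(t)^2 + R(t)^2 \,\big|\, U(t)} - U(t)\expect{\mu(t) - R(t)\,\big|\, U(t)}. \]
To identify the constant $B$, I would bound $\mu(t) = C(P_{tran}(t), S(t)) \le C(P_{max}, S(t))$ using the model assumption that maximum power yields the largest achievable rate, so $\mu(t)^2 \le C(P_{max},S(t))^2$; and use the stated bound $R(t)^2 \le \sigma^2$. Since the channel state $S(t)$ and arrivals $A(t)$ are drawn independently of $U(t)$, the term $\tfrac12\expect{\mu(t)^2 + R(t)^2 \mid U(t)}$ is at most $\tfrac12\left[\sigma^2 + \expect{C(P_{max},S(t))^2}\right] = B$ as defined in (\ref{eq:B}).

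The one point requiring a little care is replacing $\expect{R(t) \mid U(t)}$ by $\expect{m(A(t),k(t)) \mid U(t)}$. Here I would use iterated expectations together with the conditional i.i.d. assumption on the compressor output: conditioned on $A(t) = a$ and the compression decision $k(t) = k$ (which is itself a deterministic-or-randomized function of $A(t)$ and $U(t)$), the output $\Psi(A(t),k(t))$ has mean $m(a,k)$ by (\ref{eq:m-table}). Hence $\expect{R(t) \mid U(t)} = \expect{m(A(t),k(t)) \mid U(t)}$, and substituting this into the drift bound gives exactly (\ref{eq:compute-drift}). I do not expect a serious obstacle here; the only thing to watch is keeping the conditioning straight, so that the queue-dependent control decisions $k(t)$ and $P_{tran}(t)$ remain inside the conditional expectation and are not inadvertently treated as independent of $U(t)$.
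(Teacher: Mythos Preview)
Your proposal is correct and follows essentially the same argument as the paper: square the queue recursion, bound the second-moment term by $B$, take conditional expectations, and use iterated expectations with (\ref{eq:m-table}) to replace $\expect{R(t)\mid U(t)}$ by $\expect{m(A(t),k(t))\mid U(t)}$. One small wording point: $\sigma^2$ is defined as a bound on $\expect{R(t)^2}$ rather than on $R(t)^2$ pointwise, but since you immediately pass to the conditional expectation this does not affect the argument.
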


\begin{proof}
From (\ref{eq:queue-dynamics}) we have:
\begin{eqnarray*}
\frac{1}{2} U(t+1)^2 &=& \frac{1}{2}\left( \max[U(t) - \mu(t) , 0] + R(t)\right)^2 \\
&\leq& \frac{1}{2}\left[U(t)^2 + \mu(t)^2 + R(t)^2\right]  \\
&& - U(t)(\mu(t) - R(t))
\end{eqnarray*}
and hence (taking conditional expectations given $U(t)$):
\begin{eqnarray*}
\Delta(U(t)) &\leq& \frac{1}{2}\expect{\mu(t)^2  + R(t)^2 \left|\right. U(t)} \\
 && - U(t)\expect{\mu(t) - R(t)\left|\right.U(t)}
 \end{eqnarray*}
 It is clear that the value $\frac{1}{2} \expect{\mu(t)^2 + R(t)^2\left|\right. U(t)}$ is less than or
 equal to the constant  $B$ defined in (\ref{eq:B}), and hence:
 \begin{eqnarray}
\Delta(U(t)) \leq B - U(t) \expect{\mu(t) - R(t) \left|\right. U(t)} \label{eq:compute-drift0}
\end{eqnarray}
Noting that $R(t) = \Psi(A(t), k(t))$ and using iterated expectations, we have:
\begin{eqnarray*}
&& \hspace{-.5in} \expect{R(t) \left|\right.U(t)} \\
&=& \expect{\Psi(A(t), k(t)) \left|\right.U(t)} \\
&=& \expect{\expect{\Psi(A(t), k(t)) \left|\right.U(t), A(t), k(t)}\left|\right.U(t)} \\
&=& \expect{m(A(t), k(t))\left|\right.U(t)}
\end{eqnarray*}
where we have used the definition of $m(a,k)$ given in (\ref{eq:m-table}). Using this
equality in (\ref{eq:compute-drift0}) yields the result.
\end{proof}

Following the Lyapunov optimization framework of \cite{now} \cite{neely-energy-it}, 
we add a weighted cost term to the drift expression.  Specifically, 
from (\ref{eq:compute-drift}) we have:
\begin{eqnarray}
&& \hspace{-.2in} \Delta(U(t)) + V\expect{P_{comp}(t) + P_{tran}(t)\left|\right.U(t)} \leq  \nonumber \\
&& B - U(t) \expect{C(P_{tran}(t), S(t)) - m(A(t), k(t))\left|\right.U(t)} \nonumber \\
&& + V\expect{P_{comp}(t) + P_{tran}(t)\left|\right.U(t)} \label{eq:final-drift-metric}
\end{eqnarray}
where we have just added an additional term to both sides of (\ref{eq:compute-drift}).
Note that $\expect{P_{comp}(t)\left|\right. U(t)}$
can be expressed as follows (using iterated expectations):
\begin{eqnarray*}
&& \hspace{-.5in} \expect{P_{comp}(t)\left|\right. U(t)} \\
&=& \expect{\expect{P_{comp}(t)\left|\right. U(t), A(t), k(t)}\left|\right.U(t)} \\
&=& \expect{ \phi(A(t), k(t)) \left|\right.U(t)}
\end{eqnarray*}
Using this equality in the right hand side of (\ref{eq:final-drift-metric}) and re-arranging terms
yields:
\begin{eqnarray}
&& \hspace{-.2in} \Delta(U(t)) + V\expect{P_{comp}(t) + P_{tran}(t)\left|\right.U(t)} \leq  \nonumber \\
&& B - \expect{U(t) C(P_{tran}(t), S(t)) - VP_{tran}(t) \left|\right. U(t)} \nonumber \\
&& + \expect{U(t)m(A(t), k(t)) + V\phi(A(t), k(t)) \left|\right. U(t)} \label{eq:final-drift-metric2}
\end{eqnarray}

Now note that we have not yet used the properties of the dynamic compression and transmission
policy.  Indeed,
the above expression (\ref{eq:final-drift-metric2}) is a bound that holds for any
compression and transmission scheduling  decisions $k(t) \in \script{K}$, $P_{tran}(t) \in \script{P}$
that are made on slot $t$, including randomized
decisions.  However, note that the dynamic compression and transmission strategy is
designed specifically to minimize the right hand side of (\ref{eq:final-drift-metric2}) over all
alternative decisions that can be made on slot $t$.  Indeed, the compression algorithm
observes $A(t)$ and $U(t)$ and chooses $k(t)\in\script{K}$
to minimize $U(t)m(A(t), k(t)) + V\phi(A(t), k(t))$, which thus minimizes the following  term over all
alternative decisions that can be made on slot $t$:
\[  \expect{U(t)m(A(t),k(t)) + V\phi(A(t), k(t))\left|\right. U(t)} \]
Similarly, the transmission power allocation algorithm is designed to minimize the following
term over all alternative decisions that can be made on slot $t$:
\[  - \expect{U(t) C(P_{tran}(t), S(t)) - VP_{tran}(t) \left|\right. U(t)}  \]
It follows that the right hand side of (\ref{eq:final-drift-metric2}) is less than or equal to the
corresponding expression with $P_{tran}(t)$ and $k(t)$ replaced by $P_{tran}^*(t)$ and
$k^*(t)$, where $P_{tran}^*(t)$ and $k^*(t)$ are any other (possibly randomized)
policies that satisfy $P_{tran}^*(t) \in \script{P}$ and $k^*(t) \in \script{K}$:
\begin{eqnarray}
&\hspace{-.4in} \Delta(U(t)) + V\expect{P_{comp}(t) + P_{tran}(t)\left|\right.U(t)} \leq  \nonumber \\
&B - \expect{U(t) C(P_{tran}^*(t), S(t)) - VP_{tran}^*(t) \left|\right. U(t)} \nonumber \\
&+ \expect{U(t)m(A(t), k^*(t)) + V\phi(A(t), k^*(t)) \left|\right. U(t)} \label{eq:final-drift-metric3}
\end{eqnarray}
Now let $r_1$ be any particular value that satisfies $r_{min} \leq r_1 \leq b\expect{A(t)}$,
and let  $k^*(t)$ be the stationary randomized
policy that yields:
\begin{eqnarray}
\expect{\phi(A(t), k^*(t))} &=& h^*(r_1) \label{eq:aa}\\
\expect{m(A(t), k^*(t))} &=& r_1 \label{eq:bb}
\end{eqnarray}
Such a policy exists by (\ref{eq:stat-h}) and (\ref{eq:stat-r})
of Lemma \ref{lem:exist-particular}.  Similarly, let $r_2$ be any value that
satisfies $0 \leq r_2 \leq r_{max}$, and let $P_{tran}^*(t)$
be the stationary randomized power allocation policy that yields:
\begin{eqnarray}
\expect{C(P_{tran}^*(t), S(t))} &=& r_2 \label{eq:cc} \\
\expect{P_{tran}^*(t)} &=& g^*(r_2) \label{eq:dd}
\end{eqnarray}
Such a policy exists by
(\ref{eq:stat-tran-r}) and (\ref{eq:stat-tran-g}).  Further, the stationary randomized
policies of (\ref{eq:aa})-(\ref{eq:dd}) base decisions only on the current $A(t)$ and
$S(t)$ states, which are i.i.d. over slots (and are hence independent of the current
queue backlog $U(t)$).  Thus, the expectations  of (\ref{eq:aa})-(\ref{eq:dd})
are the same when conditioned on $U(t)$.
Plugging (\ref{eq:aa})-(\ref{eq:dd}) into the right hand side of (\ref{eq:final-drift-metric3}) thus
yields:
\begin{eqnarray}
\Delta(U(t)) + V\expect{P_{comp}(t) + P_{tran}(t)\left|\right.U(t)} \leq \nonumber \\
 B - U(t)(r_2 - r_1) + V(h^*(r_1) + g^*(r_2)) \label{eq:yabba}
\end{eqnarray}

The above inequality holds for all $r_1$ and $r_2$ that satisfy
$r_{min} \leq r_1 \leq b\expect{A(t)}$ and $0 \leq r_2 \leq r_{max}$.
Let $r_1 = r_2 = r^*$, where $r^*$ is the value of $r$ that optimizes the
problem in (\ref{eq:min-h-and-g}) and (\ref{eq:bounds-h-and-g}) of Theorem \ref{thm:min-power},
so that $P_{av}^* = h^*(r^*) + g^*(r^*)$.  Plugging into (\ref{eq:yabba}), we have:
\[ \Delta(U(t)) + V\expect{P_{comp}(t) + P_{tran}(t)\left|\right.U(t)} \leq B  + VP_{av}^* \]
Using the above drift inequality in the Lyapunov Drift Lemma (Lemma \ref{lem:lyap-drift})
and defining $\alpha(t) = VP_{comp}(t) + VP_{tran}(t)$ and $\beta(t) = B + VP_{av}^*$
yields $\overline{P}_{tot} \leq P_{av}^* + B/V$, proving equation (\ref{eq:power-bound})
of Theorem \ref{thm:performance}.

Now choose $r_1 = r_{min}$ and $r_2 = r_{max}$.  Plugging into (\ref{eq:yabba})
and noting that $P_{comp}(t) \geq 0$ and $P_{tran}(t) \geq 0$
gives:
\begin{eqnarray*}
  \Delta(U(t))  \leq
  B  - U(t)(r_{max} - r_{min}) \\
  +  V(h^*(r_{min}) + g^*(r_{max}))
  \end{eqnarray*}
Using the above drift inequality in the Lyapunov Drift Lemma (Lemma \ref{lem:lyap-drift})
and defining $\alpha(t) = U(t)(r_{max} - r_{min})$ and $\beta(t) = B + V(h^*(r_{min}) + g^*(r_{max}))$,
we have:
\[ \limsup_{t\rightarrow\infty} \frac{1}{t} \sum_{\tau=0}^{t-1} \expect{U(\tau)} \leq  \frac{B + V(h^*(r_{min})
+ g^*(r_{max}))}{(r_{max} - r_{min})} \]
The result of (\ref{eq:u-bound}) follows because
$g^*(r_{max}) \leq P_{max}$ and $h^*(r_{min}) \leq \phi_{max}$.  This completes
the proof of Theorem \ref{thm:performance}.

\section{A Simple Delay  Improvement}  \label{section:improvement} 

Here we present a simple improvement to the transmission algorithm 
that can decrease queue backlog while maintaining the exact same 
average power performance specified in Theorem \ref{thm:performance}. 
First observe that the performance theorem (Theorem \ref{thm:performance}) and 
the Lyapunov Drift Lemma (Lemma \ref{lem:lyap-drift}) both specify time average
behavior that is independent of the initial queue backlog.  Indeed, the affects
of the initial condition are transient and decay over time. 
Now suppose the transmission algorithm has the following property: 

\emph{Property 1:} There exists a finite constant $U_{thresh}\geq0$ such that
if $U(0) \geq U_{thresh}$, then $U(t) \geq U_{thresh}$ for all time $t \geq 0$. 

Thus, Property 1 says that if the queue has an initial condition of at least $U_{thresh}$ 
bits, then it will never fall below this threshold of bits.   Clearly Property 1 always
holds with $U_{thresh}=0$.  In any system where Property 1 holds for some constant
$U_{thresh}>0$, then the first $U_{thresh}$ bits in the queue are just acting as a
\emph{place holder} to make $U(t)$ large enough to properly affect the stochastic
optimization.  

\subsection{Example Showing that $U_{thresh}>0$ is Typical} Suppose there is a finite constant $\beta_{max}$ such that: 
\[ C(P, S) \leq \beta_{max} P \: \: \mbox{ for all $P \in \script{P}$ and all $S\in\script{S}$} \]
For example, if the transmission rate function $C(P,S)$ is differentiable with 
respect to $P$, then $\beta_{max}$ can be defined as the largest derivative
with respect to $P$
over all possible channel states.   Because the algorithm chooses $P_{tran}(t)$ 
every slot as the maximizer of $U(t)C(P, S(t)) - VP$ over all $P \in \script{P}$,
it is clear that $P_{tran}(t) = 0$ whenever $U(t)\beta_{max} < V$.  Indeed, we have
for any channel state $S(t)$: 
\[ U(t) C(P, S(t)) - VP \leq [U(t)\beta_{max} - V]P \]
which is maximized only by $P = 0$ if $U(t)\beta_{max} < V$. 
Therefore,  if $U(t) < V/\beta_{max}$, we have $P_{tran}(t) = 0$ and hence 
$\mu(t) = C(P_{tran}(t), S(t)) = 0$, so that the queue backlog cannot further
decrease.  
Define $\mu_{max}$ as the largest possible
transmission rate during a single slot (equal to the maximum of $C(P_{max}, S)$
over all $S \in \script{S}$).   It follows that Property 1 holds in this example with:\footnote{The
$U_{thresh}$ value in (\ref{eq:u-thresh}) satisfies Property 1, but is not necessarily the largest value
that satisfies this property.  Performance can be improved if a larger value that satisfies
Property 1 can be found,  which is often possible for concave rate-power curves defined over
a continuous interval.} 
\begin{equation} \label{eq:u-thresh} 
U_{thresh} \defequiv \max\left[0, \frac{V}{\beta_{max}} - \mu_{max}\right] 
\end{equation} 
The value $U_{thresh}$ determines the number of \emph{place holder bits} 
required in the system. 

\subsection{Delay Improvement Via Place Holder Bits} 

If Property 1 holds for $U_{thresh}>0$, performance can be improved in the following
way: With $U(t)$ being the actual queue backlog, define the \emph{place-holder backlog} 
$\hat{U}(t)$ as follows: 
\[ \hat{U}(t) \defequiv U(t) + U_{thresh} \]
The value $\hat{U}(t)$ can be viewed as backlog that is equal to the actual 
backlog plus $U_{thresh}$ ``fake bits.'' Now assume that $U(0) = 0$, but implement
the Dynamic Compression and Transmission Algorithm using the place-holder
backlog $\hat{U}(t)$ everywhere, instead of the actual queue backlog.  That is, choose
 $k(t) \in \script{K}$ and $P_{tran}(t) \in \script{P}$ as follows: 
\begin{eqnarray*}
 k(t) &=& \arg\min_{k \in \script{K}}[\hat{U}(t) m(A(t), k) + V\phi(A(t), k)] \\
 P_{tran}(t) &=& \arg \max_{P \in \script{P}}[\hat{U}(t) C(P, S(t)) - VP]
 \end{eqnarray*}
 
 With this implementation, we have $\hat{U}(0) = U_{thresh}$, and so 
 $\hat{U}(t) \geq U_{thresh}$ for all $t$ (by Property 1).  It follows that 
 any transmission decisions never take $\hat{U}(t)$ lower than $U_{thresh}$, 
 which is equivalent to saying that all transmission decisions transmit only
 \emph{actual data} (so that $U(t) \geq 0$), rather than \emph{fake data}. 
 The resulting decisions are the same as those in a system with initial backlog 
 of $U_{thresh}$, which yields the same $O(1/V)$ energy performance guarantee as
 before, and yields the same $O(V)$ time average backlog guarantee for the 
 $\hat{U}(t)$ backlog.  However, the \emph{actual} queue backlog is exactly 
 $U_{thresh}$ bits lower than $\hat{U}(t)$ at every instant of time, and so the 
 time average backlog is also exactly $U_{thresh}$ bits lower. Thus, this simple
 improvement yields less actual queue backlog in the system, without any 
 loss in performance.  
 This improvement does not change the $[O(1/V), O(V)]$ 
 tradeoff relation (it simply multiplies the $O(V)$ congestion bound by a smaller
 coefficient), but can yield practical backlog and delay improvements for implementation
 purposes. 
 
 \section{Distortion Constrained Data Compression} \label{section:distortion} 

In the previous sections, we have assumed that all compression options $k \in \script{K}$
are either lossless, or that the distortion they introduce is acceptable.  In this section, we expand the model to allow each compression option to have
its own distortion properties.  Specifically, let $D(t)$ be a non-negative 
real number that represents
a measure of the distortion introduced at time $t$ due to compression.  We assume that this
is a random function of $A(t)$ (the number of packets compressed) and $k(t)$ (the 
compression option), 
and that this function is stationary and independent over all slots with the same 
$A(t)$ and $k(t)$. Define  $d(a, k)$ as the expected distortion function: 
\[ d(a, k) \defequiv \expect{D(t) \left|\right. A(t) = a, k(t) = k} \]

We assume the maximum second moment of distortion is bounded by some
finite constant $\dmax^2$: 
\[ \dmax^2 \defequiv \max_{a \in \{1, \ldots, N\}, k \in \script{K}} \left[  \expect{D(t)^2 \left|\right. A(t) = a, k(t) = k} \right] \]

Assuming that distortion is additive, the goal in this section is to make joint transmission 
and compression actions to minimize time average power expenditure subject to queue
stability and subject to the constraint that time average distortion is bounded by a constant
$d_{av}$: 
\begin{equation} \label{eq:distortion-constraint} 
\lim_{t\rightarrow\infty} \frac{1}{t}\sum_{\tau=0}^{t-1} \expect{D(\tau)} \leq d_{av} 
\end{equation} 
Let $\script{K}$ now represent a set of \emph{extended} compression options, which 
includes lossy compression options (and may also include the maximum distortion
option of throwing away
all data that arrives on slot $t$).   Lossless compression options may still be
available and yield $D(t) = 0$. 

To ensure the distortion constraint (\ref{eq:distortion-constraint}) is satisfied, 
we introduce  a \emph{distortion queue} $X(t)$ that accumulates the total amount of 
distortion in excess of $d_{av}$.  This is similar to the \emph{virtual power queue} 
introduced in \cite{neely-energy-it} for ensuring average power constraints, 
and is an example of a \emph{virtual cost queue} from \cite{now}. 
Specifically, the $X(t)$ queue is implemented purely in software.  It is 
initialized to $X(0)=0$, and is changed from slot to slot according to the following
dynamics: 
\begin{equation} \label{eq:distortion-queue} 
X(t+1) = \max[X(t) - d_{av}, 0] + D(t)
\end{equation} 
where $D(t)$ is the random amount of distortion introduced on slot $t$ (observed
at the end of the compression operation). Stabilizing the $X(t)$ queue ensures the
time average  rate of the $D(t)$ ``arrivals'' is less than or equal to $d_{av}$, which is equivalent
to the distortion constraint (\ref{eq:distortion-constraint}).

\subsection{The Distortion-Constrained Algorithm} 

The queue backlog $U(t)$ evolves as before, according to the dynamics
(\ref{eq:queue-dynamics}).  
The transmission algorithm that selects $P_{tran}(t)\in\script{P}$ every slot 
is the same as before (equation (\ref{eq:transmit-alg1})), 
and the new compression algorithm is given as follows: 

\emph{\underline{Distortion-Constrained Compression Algorithm}:} Every slot
$t$, observe the number of new packet arrivals $A(t)$ and the current queue
backlogs $U(t)$ and $X(t)$, and choose compression option $k(t) \in \script{K}$ 
as follows:
\begin{eqnarray*}
k(t) = \arg \min_{k \in \script{K}} & \left[ U(t)m(A(t), k) + X(t)d(A(t), k) \right. \\
&  \left. + V\phi(A(t), k) \right] 
\end{eqnarray*}
After the compression operation, observe the actual distortion 
$D(t)$ and update $X(t)$ according to (\ref{eq:distortion-queue}). 

We note that the distortion-constrained dynamic compression algorithm
above is useful even in cases when the average power expenditure 
$\phi(A(t), k)$ due to compression is negligible.  This is because 
compression can significantly save transmission power, although intelligent
compression strategies are required to meet the distortion constraints. 

\subsection{Minimum Average Power with Distortion Constraints} 

Define $h_d^*(r)$ as the infimum time average power over all 
compression strategies that make decisions $k(t) \in \script{K}$ as a 
stationary and random function of the observed number of packets $A(t)$, 
subject to a time average output rate of the compressor that is at most $r$ bits/slot, 
and subject to a time average distortion of at most $d_{av}$.  Let $r_{d, min}$ represent
the smallest possible time average output rate of the compressor that yields 
a time average distortion of at most $d_{av}$, optimized over all 
algorithms that choose $k(t) \in \script{K}$ as a stationary and random function of $A(t)$ (and 
hence independently of queue backlog).  We assume that $r_{d, min} < r_{max}$,
so that it is feasible to meet the distortion constraint.  We further assume that it is 
possible to meet the distortion constraint with \emph{strict inequality} while stabilizing
the system.  That is, we assume there is an $\epsilon>0$ such that 
it is possible to achieve a time average distortion rate
of $\overline{D} \leq d_{av} - \epsilon$ using a policy that chooses 
$k(t) \in \script{K}$ as a stationary and random function of $A(t)$, such that 
$\expect{m(A(t), k(t))} \leq r_{max}$. 

Recall that 
$g^*(r)$ is the infimum time average transmission power over all stationary
randomized strategies $P_{tran}(t) \in \script{P}$ that yield time average transmission
rate at least $r$ bits/slot.  Let $P_{av}^*$ represent the minimum time 
average power (summed over compression
and transmission powers) required to stabilize the queueing system subject to the
distortion constraint (\ref{eq:distortion-constraint}). 

\begin{thm} \label{thm:distortion-constrained-min-power}  (Distortion Constrained Minimum
Average Power) The distortion-constrained minimum time average power $P_{av}^*$ is 
equal to the solution of the following optimization problem: 
\begin{eqnarray}
\mbox{Minimize:}  & h_{d}^*(r) +  g^*(r) \label{eq:problem-dist} \\
\mbox{Subject to:} & r_{d, min} \leq r \leq \min[r_{max}, b\expect{A(t)}] \nonumber
\end{eqnarray} 
\end{thm}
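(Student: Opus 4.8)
The plan is to establish the two inequalities $P_{av}^* \geq \mbox{(optimal value of \eqref{eq:problem-dist})}$ and $P_{av}^* \leq \mbox{(optimal value of \eqref{eq:problem-dist})}$ separately, mirroring the structure used for Theorem~\ref{thm:min-power} together with the Lyapunov argument of Section~\ref{section:algorithm}, but now carrying the extra distortion queue $X(t)$.

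For the \emph{converse} (lower bound) direction, I would adapt the argument sketched in Appendix~B for Theorem~\ref{thm:min-power}. Consider any joint policy that stabilizes $U(t)$ and meets the distortion constraint \eqref{eq:distortion-constraint}. Stability of $U(t)$ forces the time average compressor output rate $\overline{r} = \lim \frac1t\sum_\tau \expect{m(A(\tau),k(\tau))}$ to be at most the time average transmission rate, which in turn is at most $r_{max}$; it is also trivially at most $b\expect{A(t)}$, so $\overline{r}$ lies in the feasible interval $[r_{d,min}, \min[r_{max}, b\expect{A(t)}]]$ (the lower endpoint $r_{d,min}$ is exactly where we use that the distortion constraint is met). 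Then a time-averaging / convexity argument (Jensen over the empirical distribution of $(A(t),k(t))$ states, exactly as for $h^*$) shows the time average compression power is at least $h_d^*(\overline{r})$ and the time average transmission power is at least $g^*(\overline{r})$; summing and minimizing over $\overline{r}$ in the feasible set gives the bound. The one genuinely new ingredient relative to Theorem~\ref{thm:min-power} is verifying that the definition of $h_d^*(r)$ — which bakes in a \emph{simultaneous} average-rate bound and average-distortion bound — is the correct quantity, i.e.\ that an arbitrary (non-stationary) stabilizing policy meeting both constraints can be matched in the limit by a stationary randomized one; this follows by the same compactness/continuity reasoning as Lemma~\ref{lem:exist-particular}, applied to the larger constraint set that includes the distortion inequality.

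For the \emph{achievability} (upper bound) direction, I would run the Distortion-Constrained Compression Algorithm together with the transmission rule \eqref{eq:transmit-alg1} and analyze it with the combined Lyapunov function $L(U(t),X(t)) = \tfrac12 U(t)^2 + \tfrac12 X(t)^2$. Computing the drift exactly as in Lemma~\ref{lem:compute-drift} (now for two queues, using $\dmax^2$ to bound the $X$-queue's second-moment term and $\sigma^2$ for $U$), adding $V\expect{P_{comp}(t)+P_{tran}(t)\,|\,U(t),X(t)}$, and using that the algorithm is designed to minimize the resulting right-hand side pointwise, one gets that the drift-plus-penalty is bounded above by the same expression evaluated at \emph{any} alternative stationary randomized policy. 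Plugging in the near-optimal stationary randomized policy — the one achieving $\expect{\phi(A(t),k^*(t))} = h_d^*(r^*)$, $\expect{m(A(t),k^*(t))} = r^*$, $\expect{D(t)} = d_{av}$, paired with the transmission policy achieving rate $r^*$ at power $g^*(r^*)$, where $r^*$ optimizes \eqref{eq:problem-dist} — makes the $U(t)(r^* - r^*)$ and $X(t)(d_{av}-d_{av})$ terms vanish and yields $\Delta + V\expect{\cdot} \leq B' + V P_{av}^*$; Lemma~\ref{lem:lyap-drift} then gives $\overline{P}_{tot} \leq P_{av}^* + B'/V$, and letting $V\to\infty$ closes the gap. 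Separately, using the slack policy guaranteed by the assumed $\epsilon>0$ (time average distortion $\leq d_{av}-\epsilon$ while $\expect{m(A(t),k(t))}\leq r_{max}$) in the same drift bound shows $X(t)$ and $U(t)$ are strongly stable, so the distortion constraint \eqref{eq:distortion-constraint} actually holds.

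The main obstacle I anticipate is not any single calculation but the bookkeeping around the two coupled queues: one must be careful that the stationary randomized policy plugged into the drift bound simultaneously hits the right compressor output rate \emph{and} the right distortion rate \emph{and} that these are the very quantities appearing in the definition of $h_d^*(r)$ — otherwise the $U$- and $X$- drift terms will not both cancel. Relatedly, in the converse direction the subtle point is arguing that the two time-average constraints (rate and distortion) can be decoupled into the single-variable optimization over $r$ in \eqref{eq:problem-dist}; this is exactly why $h_d^*(r)$ is defined with the distortion bound folded in rather than as a separate variable, and making that reduction rigorous (via the same limiting/compactness argument as Lemma~\ref{lem:exist-particular}) is where the real care is needed. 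I would expect the full details to be deferred to an appendix, as with Theorems~\ref{thm:min-power} and \ref{thm:performance}.
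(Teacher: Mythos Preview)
Your proposal is correct and follows essentially the route the paper indicates: the paper omits the proof entirely, stating only that it ``is similar to the proof of Theorem~\ref{thm:min-power},'' and your converse direction is precisely that adaptation of Appendix~B with the distortion constraint folded into $h_d^*$, while your achievability direction is effectively the content of Theorem~\ref{thm:distortion-performance} proved in Appendix~D. One small point: in the achievability comparison the stationary policy need only satisfy $\expect{d(A(t),k^*(t))} \leq d_{av}$ rather than equality, so the $X(t)$-term is $\leq 0$ rather than exactly zero, but the bound goes through unchanged.
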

\begin{proof}
The proof is similar to the proof of Theorem \ref{thm:min-power} and is omitted for brevity. 
\end{proof} 

\subsection{Lyapunov Analysis} 

Let $\bv{Z}(t) \defequiv [U(t), X(t)]$ be the combined queue state, and define the Lyapunov
function: 
\[ L(\bv{Z}(t)) \defequiv \frac{1}{2}U(t)^2 + \frac{1}{2}X(t)^2 \]
Define the Lyapunov drift: 
\[ \Delta(\bv{Z}(t)) \defequiv \expect{L(\bv{Z}(t+1)) - L(\bv{Z}(t))\left|\right.\bv{Z}(t)} \]
\begin{lem} \label{lem:distortion-drift} For any constant
$V\geq 0$, the Lyapunov drift $\Delta(\bv{Z}(t))$ satisfies
the following for all $t$ and all $\bv{Z}(t)$:
\begin{eqnarray*}
&& \hspace{-.7in} \Delta(\bv{Z}(t)) + V\expect{P_{tot}(t) \left|\right. \bv{Z}(t) } \leq C  \\
&&  - U(t)\expect{C(P_{tran}(t), S(t)) \left|\right.\bv{Z}(t)} \\
&& + U(t) \expect{m(A(t), k(t))\left|\right. \bv{Z}(t)} \\
&&  - X(t)\expect{d_{av} - d(A(t), k(t)) \left|\right.\bv{Z}(t)} \\
&& + V\expect{P_{tran}(t) + \phi(A(t), k(t)) \left|\right.\bv{Z}(t)} \\
\end{eqnarray*}
where the constant $C$ is given by: 
\begin{eqnarray} 
C &\defequiv& \frac{1}{2} \left[d_{av}^2 + \dmax^2  + \sigma^2 + \expect{C(P_{max}, S(t))^2} \right]  \label{eq:C} 
\end{eqnarray} 
\end{lem}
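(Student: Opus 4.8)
The plan is to reproduce the single-queue drift computation of Lemma~\ref{lem:compute-drift}, but applied independently to each of the two queues $U(t)$ and $X(t)$, and then to add the two resulting bounds. Both queues obey an update of the generic form $Q(t+1) = \max[Q(t) - b(t), 0] + a(t)$: for $U(t)$ one has $b(t) = \mu(t) = C(P_{tran}(t), S(t))$ and $a(t) = R(t) = \Psi(A(t), k(t))$ by (\ref{eq:queue-dynamics}), while for $X(t)$ one has $b(t) = d_{av}$ and $a(t) = D(t)$ by (\ref{eq:distortion-queue}). For any such update the elementary inequality $Q(t+1)^2 \leq Q(t)^2 + a(t)^2 + b(t)^2 - 2Q(t)(b(t) - a(t))$ holds, exactly as in the proof of Lemma~\ref{lem:compute-drift}.

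First I would apply this bound to $\frac{1}{2}U(t+1)^2$ and to $\frac{1}{2}X(t+1)^2$, add them, and take the conditional expectation given $\bv{Z}(t) = [U(t), X(t)]$, obtaining
\begin{eqnarray*}
\Delta(\bv{Z}(t)) &\leq& \frac{1}{2}\expect{\mu(t)^2 + R(t)^2 + d_{av}^2 + D(t)^2 \left|\right. \bv{Z}(t)} \\
&& - U(t)\expect{\mu(t) - R(t) \left|\right. \bv{Z}(t)} - X(t)\expect{d_{av} - D(t) \left|\right. \bv{Z}(t)}.
\end{eqnarray*}
Then I would bound the four second-moment terms by constants: $\mu(t)^2 = C(P_{tran}(t), S(t))^2 \leq C(P_{max}, S(t))^2$, the conditional expectation of $R(t)^2$ is at most $\sigma^2$, the conditional expectation of $D(t)^2$ is at most $\dmax^2$, and $d_{av}^2$ is deterministic; together these show that the first term is at most the constant $C$ defined in (\ref{eq:C}). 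Adding $V\expect{P_{tot}(t) \left|\right. \bv{Z}(t)}$ to both sides, with $P_{tot}(t) = P_{comp}(t) + P_{tran}(t)$, then gives a bound of the claimed shape, but still written in terms of the physical quantities $R(t)$, $D(t)$, and $P_{comp}(t)$.

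The final step converts those terms to the table functions via iterated expectations, precisely as in Lemma~\ref{lem:compute-drift}: conditioning additionally on $(A(t), k(t))$ and using the conditional-i.i.d.\ assumptions on $\Psi$, $D$, $P_{comp}$ together with the definitions of $m(a,k)$, $d(a,k)$, $\phi(a,k)$ yields $\expect{R(t) \left|\right. \bv{Z}(t)} = \expect{m(A(t), k(t)) \left|\right. \bv{Z}(t)}$, $\expect{D(t) \left|\right. \bv{Z}(t)} = \expect{d(A(t), k(t)) \left|\right. \bv{Z}(t)}$, and $\expect{P_{comp}(t) \left|\right. \bv{Z}(t)} = \expect{\phi(A(t), k(t)) \left|\right. \bv{Z}(t)}$. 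Substituting $\mu(t) = C(P_{tran}(t), S(t))$ and regrouping the $U(t)$-, $X(t)$-, and $V$-weighted terms produces exactly the inequality in the lemma statement. I do not expect any genuine obstacle here; the only point that requires care is the bookkeeping of the nested conditioning --- verifying that the inner conditional expectations of $R(t)$, $D(t)$, and $P_{comp}(t)$ given $(\bv{Z}(t), A(t), k(t))$ collapse to the tabulated averages (guaranteed by the stationarity and conditional-independence assumptions on $\Psi$, $D$, and $P_{comp}$), and that the independence of $A(t)$ and $S(t)$ from $\bv{Z}(t)$ is used correctly when bounding the second moments. Everything else is the two-queue analogue of a computation already carried out in the paper.
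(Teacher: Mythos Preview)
Your proposal is correct and follows exactly the approach the paper intends: the paper omits the proof, stating only that it is similar to that of Lemma~\ref{lem:compute-drift}, and your write-up carries out precisely that two-queue analogue (separate quadratic drift bounds for $U(t)$ and $X(t)$, sum, bound second moments by $C$, add the $V$-weighted cost, then replace $R(t)$, $D(t)$, $P_{comp}(t)$ by $m$, $d$, $\phi$ via iterated expectations). There is nothing to add.
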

\begin{proof} 
The proof is similar to the proof of 
Lemma \ref{lem:compute-drift} and is omitted for brevity.
\end{proof} 

It can be seen that the Distortion Constrained Compression and Transmission Algorithm
is designed to observe current queue backlogs $X(t)$, $U(t)$ and arrival and channel states
$A(t)$, $S(t)$, and take control actions $k(t) \in \script{K}$, $P_{tran}(t) \in\script{P}$ to 
minimize the right hand side of the drift bound given in Lemma
\ref{lem:distortion-drift}.  

\begin{thm} \label{thm:distortion-performance} (Algorithm Performance with Distortion 
Constraint) Suppose $A(t)$ and $S(t)$ are i.i.d. over slots, and that $r_{d, min} < r_{max}$. 
For any control parameter $V>0$, the Distortion Constrained Compression and Transmission
Algorithm stabilizes the network and satisfies: 
\begin{eqnarray}
\overline{P}_{tot}  &\leq& P_{av}^* + \frac{C}{V} \label{eq:p-dist-bound}  \\
\overline{U}  &\leq& \frac{C + V(P_{max} + \phi_{max})}{(r_{max} - r_{d, min})} \label{eq:u-dist-bound} \\
\overline{D} &\leq& d_{av} \label{eq:d-dist-bound} 
\end{eqnarray}
where $\overline{U}, \overline{P}_{tot}$, and $\overline{D}$ represent $\limsup$ time average
expected queue backlog, total power, and distortion, and the constant $C$ is defined 
in (\ref{eq:C}). 
\end{thm}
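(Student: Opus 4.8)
The plan is to follow the proof of Theorem~\ref{thm:performance} almost verbatim, replacing the one‑dimensional drift bound by the two‑dimensional bound of Lemma~\ref{lem:distortion-drift} and carefully tracking the extra distortion‑queue term. The starting point is that the Distortion‑Constrained Compression and Transmission Algorithm chooses $k(t)$ and $P_{tran}(t)$ precisely to minimize the right‑hand side of Lemma~\ref{lem:distortion-drift} over all decisions $k(t)\in\script{K}$, $P_{tran}(t)\in\script{P}$ feasible on slot $t$ (including randomized ones). Hence, keeping the algorithm's actual power on the left‑hand side, we may upper bound the right‑hand side by the same expression evaluated at any alternative, in particular stationary randomized, policy $k^*(t),P_{tran}^*(t)$ that bases decisions only on $A(t)$ and $S(t)$; since $A(t),S(t)$ are i.i.d.\ and independent of $\bv{Z}(t)$, the resulting conditional expectations given $\bv{Z}(t)$ equal the unconditional ones. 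Before doing this I would record, by a compactness argument parallel to Lemma~\ref{lem:exist-particular} (continuity of expected power, expected output rate, and expected distortion in the randomization weights over the compact probability simplex), that the infimum $h_d^*(r)$ is attained by a stationary randomized compression policy with $\expect{\phi(A(t),k^*(t))}=h_d^*(r)$, $\expect{m(A(t),k^*(t))}\le r$, and $\expect{d(A(t),k^*(t))}\le d_{av}$, and similarly that $r_{d,min}$ and $g^*(r)$ are attained.

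For the power bound (\ref{eq:p-dist-bound}): in Lemma~\ref{lem:distortion-drift} substitute the stationary policy attaining $h_d^*(r^*)$ for the compression decision and the one attaining $g^*(r^*)$ (with transmission rate exactly $r^*$) for the transmission decision, where $r^*$ solves (\ref{eq:problem-dist}) so that $P_{av}^*=h_d^*(r^*)+g^*(r^*)$. The $h_d^*(r^*)$ policy has average distortion at most $d_{av}$, so $-X(t)\expect{d_{av}-d(A(t),k^*(t))\,|\,\bv{Z}(t)}\le 0$ and is dropped; the $U(t)$ terms combine to at most $-U(t)(r^*-r^*)=0$; and the cost terms become $V(h_d^*(r^*)+g^*(r^*))=VP_{av}^*$. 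This gives $\Delta(\bv{Z}(t))+V\expect{P_{tot}(t)\,|\,\bv{Z}(t)}\le C+VP_{av}^*$, and the Lyapunov Drift Lemma (Lemma~\ref{lem:lyap-drift}) with $\alpha(t)=VP_{tot}(t)$, $\beta(t)=C+VP_{av}^*$ yields $\overline{P}_{tot}\le P_{av}^*+C/V$.

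For the backlog bound (\ref{eq:u-dist-bound}) I would instead substitute the stationary compression policy attaining output rate $r_{d,min}$ (again with distortion at most $d_{av}$, so the $X(t)$ term is dropped) together with $P_{tran}(t)=P_{max}$ (average rate $r_{max}$). After dropping the nonnegative $V\expect{P_{tot}(t)\,|\,\bv{Z}(t)}$ this leaves $\Delta(\bv{Z}(t))\le C-U(t)(r_{max}-r_{d,min})+V(h_d^*(r_{d,min})+g^*(r_{max}))$, and Lemma~\ref{lem:lyap-drift} with $\alpha(t)=U(t)(r_{max}-r_{d,min})$, together with $h_d^*(r_{d,min})\le\phi_{max}$ and $g^*(r_{max})\le P_{max}$, gives (\ref{eq:u-dist-bound}). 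For the distortion bound (\ref{eq:d-dist-bound}) I would use the assumed $\epsilon$‑slack policy: a stationary compression rule with $\expect{m(A(t),k(t))}\le r_{max}$ and average distortion at most $d_{av}-\epsilon$, paired with $P_{tran}(t)=P_{max}$. Substituting into Lemma~\ref{lem:distortion-drift} produces a drift bound with strictly negative coefficients on both $U(t)$ and $X(t)$, namely $-U(t)(r_{max}-\bar r)$ and $-\epsilon X(t)$. Taking expectations, dropping the nonnegative $U(t)$ and $VP_{tot}$ terms, summing the telescoping series over $\tau=0,\ldots,T-1$, and using $L(\bv{Z}(0))=0$ and $L\ge0$ gives both $\frac1T\sum_{\tau=0}^{T-1}\expect{X(\tau)}=O(1)$ (strong stability of $X(t)$, and similarly of $U(t)$) and $\tfrac12\expect{X(T)^2}\le\expect{L(\bv{Z}(T))}=O(VT)$, hence $\expect{X(T)}\le\sqrt{\expect{X(T)^2}}=O(\sqrt{VT})$. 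Combining with the telescoped form of (\ref{eq:distortion-queue}), $\frac1T\sum_{\tau=0}^{T-1}\expect{D(\tau)}\le d_{av}+\expect{X(T)}/T$, and letting $T\to\infty$ yields $\overline{D}\le d_{av}$, i.e.\ the constraint (\ref{eq:distortion-constraint}).

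I expect the main obstacle to be the distortion bound (\ref{eq:d-dist-bound}) rather than (\ref{eq:p-dist-bound}) or (\ref{eq:u-dist-bound}), which are essentially transcriptions of the Theorem~\ref{thm:performance} argument: proving $\overline{D}\le d_{av}$ requires the sublinear‑growth estimate $\expect{X(T)}=o(T)$, which is exactly where the $\epsilon$‑slackness hypothesis and the second‑moment bound $\dmax^2$ on the distortion enter. A secondary technical point, flagged above, is to justify rigorously that $h_d^*(r)$ and $r_{d,min}$ are genuinely attained by stationary randomized policies so that the repeated ``plug in a stationary policy'' step is legitimate; this is the analogue of Lemma~\ref{lem:exist-particular} and follows from the same continuity‑plus‑compactness reasoning.
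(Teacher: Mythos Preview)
Your proposal is correct and follows essentially the same approach as the paper's proof in Appendix~D: for each of the three bounds you plug an appropriate stationary randomized comparison policy into the drift inequality of Lemma~\ref{lem:distortion-drift} and then invoke Lemma~\ref{lem:lyap-drift}, exactly as the paper does. The only noteworthy difference is in the distortion bound~(\ref{eq:d-dist-bound}): the paper establishes strong stability of $X(t)$ and then appeals to a cited general fact (stable queue with bounded service rate $d_{av}$ forces time-average arrivals $\overline{D}\le d_{av}$), whereas you give a self-contained argument via $\expect{L(\bv{Z}(T))}=O(T)\Rightarrow\expect{X(T)}=O(\sqrt{T})$ together with the telescoped inequality $\frac{1}{T}\sum_{\tau=0}^{T-1}\expect{D(\tau)}\le d_{av}+\expect{X(T)}/T$; both routes are valid and yours is arguably cleaner since it does not rely on an external reference.
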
 
\begin{proof} 
See Appendix D.
\end{proof} 

Thus, the algorithm meets the time average distortion constraint, and the parameter $V$ can be 
used to push total time average power expenditure within $O(1/V)$ of the optimal $P_{av}^*$, 
with an  $O(V)$ tradeoff in average queue congestion $\overline{U}$ and hence average delay.
We note that an improved delay performance can be achieved by 
using $\hat{U}(t) = U(t) + U_{thresh}$ as a  replacement for $U(t)$, 
as described in Section \ref{section:improvement}, 
with $U_{thresh}$ satisfying Property 1, such as the value 
given in (\ref{eq:u-thresh}).

\section{Simulations} \label{section:simulation} 

For simplicity, we consider simulations of the dynamic compression and 
transmission algorithm  of 
Section \ref{section:algorithm} (with the simple improvement of Section 
\ref{section:improvement}), 
without treating
distortion constraints. 
To begin, we first consider a system where the optimal compression 
decision is trivial and does not require a stochastic optimization.   
Specifically, suppose that we have a system where
all three of the following ``Singularity Assumptions''  hold:

\begin{itemize} 
\item The channel is static, so that $S(t)$ is the same for all $t$.
\item The rate-power curve is linear in power, so that $C(P) = \alpha P$ for 
all $P \in \script{P}$, for 
some constant $\alpha$.
\item The raw data arrival rate is less than the maximum transmission rate, that is, 
$b\expect{A(t)} < \alpha P_{max}$. 
\end{itemize} 

In this simple case, the time average transmission power is directly proportional 
to the time average rate of bits transmitted, and so we do not require careful
transmission decisions (all transmissions are equally energy efficient).  
Further,  compression is not required for stability.  It is easy to show in this
special case that the  \emph{exact} 
minimum energy expenditure is achieved by the alternative algorithm of 
observing $A(t)$ every slot $t$
and 
choosing a compression option $\hat{k}(t)\in\script{K}$ as follows:  
\begin{equation} \label{eq:trivial} 
\hat{k}(t) = \arg\min_{k \in \script{K}} \left[\phi(A(t), k) + \frac{1}{\alpha}m(A(t), k)\right]
\end{equation} 
and then transmitting whenever there is a sufficient amount of backlog
to achieve an
efficiency of 
$\alpha$ bits/unit energy. 
That is, we simply choose the compression option that minimizes the sum of the
total energy required to compress and transmit the bits.  A similar observation is used
in \cite{energy-aware-compression} in the study of compression energy ratios  
for popular algorithms.   However, this $\hat{k}(t)$ policy is fragile, in that its optimality 
strongly
relies on all three of the above Singularity Assumptions.   Our dynamic compression and transmission
algorithm is an all-purpose algorithm that should work well for any system, including
systems  that satisfy the above Singularity Assumptions, as well as systems 
that do not.  

\subsection{Scenario I: Singularity Assumptions}  \label{sec:sim:scI}
We first consider a scenario where all three of the 
``Singularity Assumptions'' hold. 
The channel is static with $S(t)=ON$ for all time
slots $t$. The transmit power is constrained to two options
$\script{P}=\{0,1\}$ (we used normalized units of power).  
The rate-power curve is given by $C(P=1) = 2048$ bits/slot, 
and $C(P=0) = 0$. It is clear that the optimal transmission decision 
in this scenario is to transmit only when the queue size is greater 
than or equal to $\mu_{max} = 2048$ bits,  so that all transmissions 
have efficiency $\alpha = \mu_{max}$ bits per unit power.

The wireless link receives data from $8$ different
sensor units. The packet arrival process at each sensor is i.i.d.
over slots and follows a Bernoulli distribution with the probability
of an arrival $p=\frac{1}{2}$. We fix the packet size, $b$, to $256$
bits. Hence, the arrival process for the wireless link, $A(t)$,
follows a Binomial distribution with parameters $(8,\frac{1}{2})$
with an average arrival rate of $b\expect{A(t)} = 1024$ bits/slot.
Note that in this case we have  $b\expect{A(t)} < \mu_{max}$, 
and so compression is not needed  for queue stability. 

Two compression options are available to the link controller
($\script{K}=\{0,1\}$). For $A(t)=a$ and $k(t)=1$, the size of the
data after compression, $R(t)$, is uniformly distributed in
$[\frac{2ab}{5},\frac{3ab}{5}]$ and the compression power 
is uniformly distributed in $[0.45, 0.55]$. Hence, the
average compressed output is $m(a,1)=\frac{ab}{2}$ with an average
power of $\phi(a,1)=0.5$.
In this scenario, compression is energy-expensive compared to transmission, 
and, because compression is not required for stability,  it is easy 
to see from (\ref{eq:trivial})  
that  transmitting all data without compression is optimal.  Thus, the 
policy of (\ref{eq:trivial}) has  $\hat{k}(t) = 0$ for
all $t$, and transmits whenever $U(t) \geq 2048$, yielding an optimal
average power $\overline{P}_{tot} = b\expect{A(t)}/\mu_{max} = 0.5$.

\begin{figure} [t]
\centering
  \includegraphics[height=2in, width=3in]{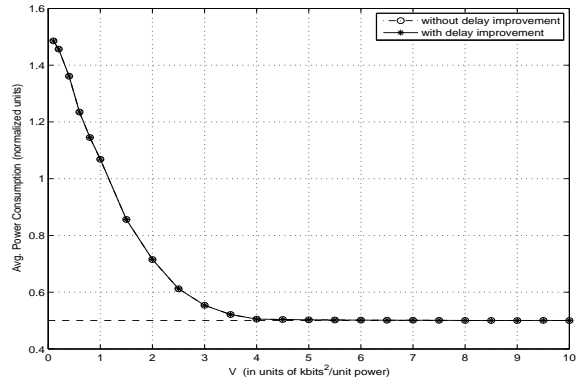} 
  \caption{Avg. Power expenditure vs. V (two nearly identical curves are shown).}
  \label{fig:Pavg}
\end{figure}

\begin{figure}[t]
\centering
  \includegraphics[height=2in, width=3in]{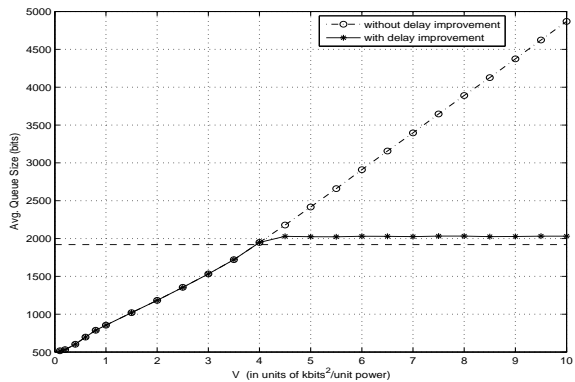} 
  \caption{Avg. Queue size vs. V}
  \label{fig:Qavg}
\end{figure}

 We simulate our dynamic compression and transmission algorithm over
 $10^6$ slots, for various choices of the 
 $V$ parameter.  
 Fig. \ref{fig:Pavg} shows that average power 
indeed converges to 
$0.5$ as $V$ is increased.  Fig. \ref{fig:Pavg} also shows that, as expected,
incorporating the simple delay improvement of 
Section \ref{section:improvement} does not affect power expenditure.  
Fig. \ref{fig:Qavg} shows the time average queue backlog versus
$V$. For simulations without the delay improvement, the queue backlog increases linearly with $V$.
The delay improvement uses $U_{thresh} = \max[V/\mu_{max} - \mu_{max},0]$, and reduces
queue backlog (maintaining  a relatively constant average backlog 
for $V \geq 5000$).   The dashed horizontal line 
at $\overline{U} = 1920$ bits (shown in Fig. \ref{fig:Qavg})
is the average queue size obtained by the $\hat{k}(t)$ policy that performs no compression
and transmits only when $U(t) \geq \mu_{max}$.

\subsection{Scenario II: Compression for Stability} 
We next consider the same scenario, but increase the raw data rate beyond
$\mu_{max}$, so that  compression is required 
for queue stability (this removes the third ``Singularity Assumption'').
However, the proper fraction of time to compress may be different for each 
observed $A(t)$ value, and in general it depends on the distribution of the arrival
process $A(t)$.  Our dynamic algorithm optimizes without this statistical knowledge,
learning the correct  actions for each observed $A(t)$ value.

Fig. \ref{fig:Pavg_arr} shows the increase in average power
expenditure for our algorithm (with delay improvement) 
as the raw arrival rate increases. This raw arrival rate
is increased by adjusting the packet size $b$ from $256$ to $1024$
(the parameter
$V$ is fixed to $V = 10$ $\mbox{kbits}^2/$unit power,  
so that $U_{thresh} = \max[V/\mu_{max}  - \mu_{max}, 0] \approx 2835$ bits, 
and the simulation time for each data point
is five million slots).   Also shown is the average power when there is no compression
but when the same dynamic transmission strategy is used.
For arrival rates $b\expect{A(t)} > 1024$, compression is required for energy efficiency, 
and for $b\expect{A(t)} >  \mu_{max} = 
2048$,  compression is  required for both energy efficiency and stability.   
For $V=10$, our dynamic algorithm yields energy efficiency within roughly 
$0.4\%$ of optimal for the rate region tested.  
For example, when the raw arrival rate is 3400, the
optimum is $P_{av}^* = 1.310$ (achievable by compressing whenever 
$A(t) \geq 3$), and our algorithm achieves $\overline{P}_{tot} = 1.314$. 
Because compression reduces data on average by a factor of 2, the maximum raw
arrival rate that can be stably supported is
$2\mu_{max}$. When $b\expect{A(t)} \geq 2\mu_{max}$, our algorithm
learns to compress \emph{all} data,  leading to
an average power expenditure of $1.5$ ($0.5$ power units for compression, plus
$1$ unit for transmission).

\begin{figure}[t]
\centering
  \includegraphics[height=2in, width=3in]{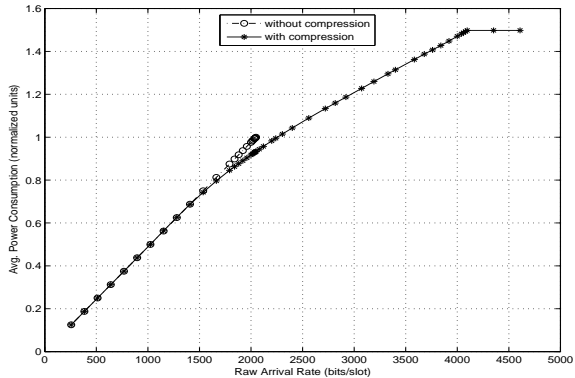} 
  \caption{Avg. Power Consumption vs. Raw Arrival Rate.  Data points for the 
  experiments without compression are shown only for the stable region, i.e., 
  for raw arrival rates less than 2048.}
  \label{fig:Pavg_arr}
\end{figure}

\begin{figure}[t]
\centering
  \includegraphics[height=2in,width=3in]{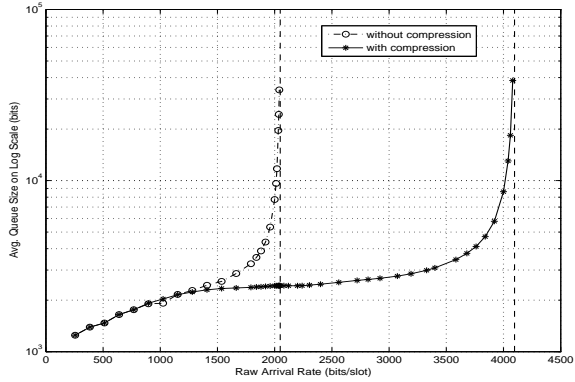} 
  \caption{Avg. Queue size (bits) vs. Raw Arrival Rate.}
  \label{fig:Qavg_arr}
\end{figure}

Fig. \ref{fig:Qavg_arr} shows the change in average queue size as
the raw data arrival rate is increased.  Without any data compression, 
the average queue backlog grows to infinity as the raw data rate 
approaches the vertical asymptote $2048$ bits/slot. With the
data compression option the queue size remains quite flat beyond this threshold, 
increasing at a new vertical asymptote at $4096$ bits/slot.

\subsection{Scenario III:  Nonlinear Rate-Power Curve}
\label{sec:sim:scII} For this scenario, the Bernoulli arrival process is the
same as in Scenario I (Section \ref{sec:sim:scI}), with packet size $b = 256$ bits. 
However, we make
the following changes:
\begin{itemize}
\item The rate-power curve is non-linear in power with $C(P) = \alpha
\log(1+\beta P)$ for transmit power $P$, where $P$ is any real number
in the interval $0 \leq P \leq P_{max}$.\footnote{The $\log()$ used here denotes a natural
logarithm.} 
\item The raw data arrival rate is less than the maximum
transmission rate, i.e. $b\expect{A(t)}  = 1024 < \mu_{max} \defequiv \alpha \log(1+\beta P_{max})$.
\item The compressed data 
was taken from a trace of experimental 
data from \cite{VTB}, and was compressed using
the zlib compression library \cite{www-zlib}.   
\end{itemize}


A single compression option ($\script{K}=\{0,1\}$) is available at
the transmitting node.  For $k=1$, we have $m(a,k) = \frac{ab}{1.1}$ for $A(t) = a \leq 3$ packets, 
and $m(a,k) = \frac{ab}{1.5}$ for $A(t) = a > 3$.  These average compression ratios were 
obtained from 
the experimental data in  \cite{VTB} using
the zlib data compression library \cite{www-zlib}. The work in 
\cite{VTB} considers a wireless sensor network where each node senses
vibrations of a large suspension bridge.
We assume the 
power expenditure during compression, $\phi(a,1)$, is $5$ units for
$A(t)=a \leq 3$  and $8$ units for $A(t)=a > 3$.
For transmission, we 
use $P_{max} = 750$ power units, $\alpha = 1060$, and $\beta=1/16$, so
that $\mu_{max} = C(P_{max}) \approx 4100$ bits/slot.

\begin{figure}[t]
\centering
  \includegraphics[height=2in, width=3in]{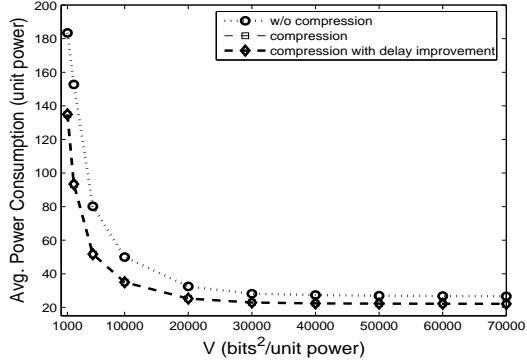} 
  \caption{Nonlinear Rate-Power Curve: Avg. energy usage vs. V.  The two curves with 
  compression (with and without delay improvement) are almost identical.}
  \label{fig:trace_Pavg}
\end{figure}

\begin{figure}[t]
\centering
  \includegraphics[height=2in, width=3in]{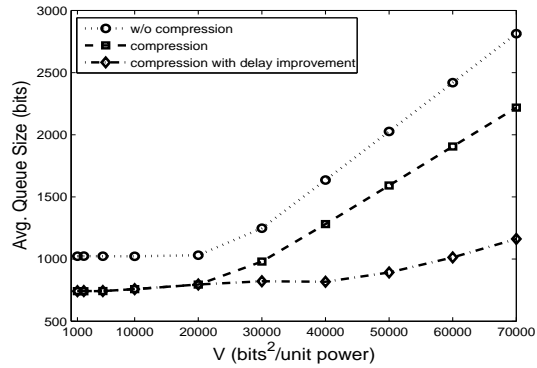} 
  \caption{Nonlinear Rate-Power Curve: Avg. Queue size vs. V}
  \label{fig:trace_Qavg}
\end{figure}

Fig. \ref{fig:trace_Pavg} shows that the average power consumption
for our dynamic algorithm converges to $22.21$.  As expected, the power 
curves are almost identical
with and without delay improvement.
Fig. \ref{fig:trace_Pavg} also shows the
average power expenditure converges to $26.042$ if all data is transmitted
uncompressed (but still using our transmission strategy of (\ref{eq:transmit-alg1})).
Our dynamic compression algorithm yields an
energy savings between $15$ and $25$ percent 
across the $V$ range tested, as  
compared to sending all the data
uncompressed.
Fig. \ref{fig:trace_Qavg} shows how the average
queue backlog increases with $V$.  The plot without delay improvement 
uses $U_{thresh} = 0$ (which is also the value of $U_{thresh}$ that would
be given by (\ref{eq:u-thresh}) for the $V$ range tested),\footnote{The value of $U_{thresh}$ given by 
(\ref{eq:u-thresh}) uses 
$\beta_{max} = \alpha\beta$ and $\mu_{max} = C(P_{max})$.} and the plot with delay improvement
uses $U_{thresh}$ as the largest 
value that satisfies  Property 1 under the given transmission policy (different from
the value given by (\ref{eq:u-thresh})). This value of $U_{thresh}$ for 
logarithmic rate-power curves is derived in Appendix E (see equation 
(\ref{eq:uthresh-log})). Note that for
 $\alpha = 1060$, $P_{max} = 750$, $\beta = 1/16$, $V  = 70000$, we have 
$U_{thresh} = 1056.6$ bits, a quite significant improvement in queue backlog with 
no loss of power efficiency.

\section{Conclusion} 

This paper presents a dynamic decision technique for joint compression and 
transmission in a wireless node, using results of stochastic network 
optimization.  The approach allows
total average power expenditure to be 
pushed arbitrarily close to optimal,  with a corresponding
delay (and queue congestion) tradeoff.  The resulting compression
and transmission algorithms are simple to implement and  operate well in a variety
of settings.   We believe this approach  will also 
be useful for management of compression and sensing in 
multi-hop networks, where energy, stability, and delay issues will become
increasingly important in future applications.

\section*{Appendix A -- Proof of Lemma \ref{lem:exist-particular}}
\begin{proof} (Lemma \ref{lem:exist-particular})
The function $h^*(r)$ is defined in terms of an infimum of $\expect{P_{comp}(t)}$ over all
stationary randomized policies that yield $\expect{R(t)} \leq r$. It follows that there exists
an infinite sequence of stationary randomized policies, indexed by integers
$i\in \{1, 2, \ldots\}$, having expectations
$\expect{R^{(i)}(t)}$ and $\expect{P_{comp}^{(i)}(t)}$ that satisfy:
\begin{eqnarray}
&\expect{R^{(i)}(t)} \leq r  \mbox{ for all $i\in\{1, 2, \ldots \}$}& \label{eq:fum1}\\
&\lim_{i\rightarrow\infty} \expect{P_{comp}^{(i)}(t)} =  h^*(r)& \label{eq:fum2}
\end{eqnarray}
However, each policy $i$ is defined in terms of a collection  of
probabilities $(\gamma_{a,k}^{(i)})$ for
$a \in \{0, 1, \ldots, N\}$ and $k \in \script{K}$.  This collection of probabilities
can be viewed as a finite dimensional vector that is
contained in a compact set $\Omega$
defined by the constraints (\ref{eq:gamma1})-(\ref{eq:gamma2}).
The compact set $\Omega$
contains its limit points, and hence the infinite sequence $\{(\gamma_{a,k}^{(i)})\}_{i=1}^{\infty}$
contains a convergent subsequence that converges to a point
$(\gamma_{a,k}^*)  \in \Omega$. This point is a vector of probabilities that  define a
stationary randomized algorithm with expectations
$\expect{R^*(t)}$ and $\expect{P_{comp}^*(t)}$.
Now recall that a general stationary randomized
algorithm  yields  expectations $\expect{R(t)}$ and $\expect{P_{comp}(t)}$
that can be expressed as linear (and hence continuous) function of the
probabilities $(\gamma_{a,k})$, as shown in the left hand sides of equations
(\ref{eq:statphi}) and (\ref{eq:statm}). Hence, the properties
(\ref{eq:fum1}) and (\ref{eq:fum2}) are preserved in the limit, so that
$\expect{R^*(t)} \leq r$ and  $\expect{P_{comp}^*(t)} = h^*(r)$.

If $\expect{R^*(t)} = r$, then we are done. Else, we have
$\expect{R^*(t)}  < r \leq b\expect{A(t)}$ (recall that $r \leq b\expect{A(t)}$ by
assumption in the statement of  Lemma \ref{lem:exist-particular}). Hence, $r = \theta\expect{R^*(t)} + (1-\theta)b\expect{A(t)}$ for some probability $\theta$. Note that the $0$-power algorithm
of no compression yields an expected compression output of exactly $b\expect{A(t)}$.  
It follows that defining $R'(t)$ as the stationary randomized policy that chooses 
$R^*(t)$ with probability $\theta$ and chooses no compression 
with probability $(1-\theta)$ yields $\expect{R'(t)} = r$. This policy $R'(t)$ cannot
use more power than policy $R^*(t)$, and hence $\expect{P_{comp}'(t)} \leq h^*(r)$. 
But we also have $h^*(r) \leq \expect{P_{comp}'(t)}$, because  $h^*(r)$ is defined as the 
infimum average power over all stationary randomized 
policies that yield a compressor output rate of at most 
$r$.   
\end{proof}

\section*{Appendix B  -- Proof of Theorem \ref{thm:min-power}}

Here we prove Theorem \ref{thm:min-power}.  Consider any policy that
stabilizes the queue, and let $k(t)$ and $P_{tran}(t)$ be the resulting compression
and transmission power decisions chosen over time (where $k(t) \in \script{K}$ and
$P_{tran}(t) \in \script{P}$ for all $t$).  Let  $R(t) = \Psi(A(t), k(t))$
be the resulting bit output process from the compressor, and let  $P_{comp}(t)$ be the
resulting compression power expenditure process.  Let $\mu(t) = C(P_{tran}(t), S(t))$ be
the transmission rate process.   We want to show that:
\begin{equation} \label{eq:want-to-show}
\limsup_{t\rightarrow\infty} \frac{1}{t} \sum_{\tau=0}^{t-1} \expect{P_{comp}(\tau) + P_{tran}(\tau)} \geq P_{av}^*
\end{equation}
where $P_{av}^*$ is defined in Theorem \ref{thm:min-power}.
We have two preliminary lemmas.

\begin{lem} \label{lem:appb-1} Suppose there are constants $r$ and $\overline{P}_c$ together
with an infinite sequence
of times  $\{t_i\}_{i=1}^{\infty}$ such that:
\begin{eqnarray}
 \lim_{t_i\rightarrow\infty} \frac{1}{t_i} \sum_{\tau=0}^{t_i-1} \expect{R(\tau)}  &=& r \label{eq:lemmab1}\\
 \lim_{t_i \rightarrow\infty} \frac{1}{t_i} \sum_{\tau=0}^{t_i-1} \expect{P_{comp}(\tau)} &=& \overline{P}_c \label{eq:lemmab2}
\end{eqnarray}
Then $\overline{P}_c \geq h^*(r)$.
\end{lem}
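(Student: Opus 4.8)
The plan is to extract from the given stabilizing policy, along the subsequence $\{t_i\}$, a single stationary randomized compression policy whose steady-state averages of $R(\tau)$ and $P_{comp}(\tau)$ match $r$ and $\overline{P}_c$; by Definition~\ref{def:1} no such policy can use less than $h^*(r)$ average compression power, which is exactly the claim. Concretely, for each index $i$ I would form the vector of \emph{empirical conditional compression frequencies}
\[
\gamma_{a,k}^{(i)} \defequiv \frac{1}{t_i\, p_A(a)} \sum_{\tau=0}^{t_i-1} Pr[A(\tau)=a,\ k(\tau)=k], \qquad a\in\{0,\ldots,N\},\ k\in\script{K},
\]
for every $a$ with $p_A(a)>0$ (the remaining values of $a$ never occur and their frequencies may be set arbitrarily). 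Using $Pr[A(\tau)=a]=p_A(a)$ (the i.i.d.\ assumption), each vector $(\gamma_{a,k}^{(i)})$ has nonnegative entries with $\sum_{k}\gamma_{a,k}^{(i)}=1$, so it lies in the compact set of stochastic vectors defined by (\ref{eq:gamma1})--(\ref{eq:gamma2}).

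Next I would rewrite the two quantities in the hypothesis in terms of these frequencies. Conditioning each slot on $(A(\tau),k(\tau))$ and using the model's conditional-i.i.d.\ assumptions for the compression output and power --- which give $\expect{R(\tau) \left|\right. A(\tau)=a, k(\tau)=k} = m(a,k)$ and $\expect{P_{comp}(\tau) \left|\right. A(\tau)=a, k(\tau)=k} = \phi(a,k)$ as in (\ref{eq:m-table})--(\ref{eq:phi-table}), regardless of how $k(\tau)$ depends on the past --- iterated expectations yield, for every $i$,
\[
\frac{1}{t_i}\sum_{\tau=0}^{t_i-1}\expect{R(\tau)} = \sum_{a,k} p_A(a)\,\gamma_{a,k}^{(i)}\, m(a,k), \qquad \frac{1}{t_i}\sum_{\tau=0}^{t_i-1}\expect{P_{comp}(\tau)} = \sum_{a,k} p_A(a)\,\gamma_{a,k}^{(i)}\, \phi(a,k).
\]
Since the feasible set is compact, a subsequence of $\{(\gamma_{a,k}^{(i)})\}$ converges to some $(\gamma_{a,k}^{*})$ in it; the right-hand sides above are linear, hence continuous, in the frequency vector, so along that subsequence they converge to $\sum_{a,k}p_A(a)\gamma_{a,k}^{*}m(a,k)$ and $\sum_{a,k}p_A(a)\gamma_{a,k}^{*}\phi(a,k)$. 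Combined with the hypotheses (\ref{eq:lemmab1})--(\ref{eq:lemmab2}), this forces $\sum_{a,k}p_A(a)\gamma_{a,k}^{*}m(a,k)=r$ and $\sum_{a,k}p_A(a)\gamma_{a,k}^{*}\phi(a,k)=\overline{P}_c$.

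Thus $(\gamma_{a,k}^{*})$ is a feasible point of the program defining $h^*(r)$: it satisfies (\ref{eq:gamma1})--(\ref{eq:gamma2}), meets the rate constraint (\ref{eq:statm}) with equality (hence with ``$\leq r$''), and attains objective value $\overline{P}_c$ in (\ref{eq:statphi}); since $h^*(r)$ is the infimum of that objective over all feasible points, $\overline{P}_c \geq h^*(r)$. The step I expect to take the most care is the conditional-expectation manipulation above: because the policy's decision $k(\tau)$ is adapted to the whole history (including $A(\tau)$, the backlog $U(\tau)$, and all earlier randomness), one must invoke precisely the model's assumption that $\Psi(A(\tau),k(\tau))$ and $P_{comp}(\tau)$ are drawn conditionally i.i.d.\ given $(A(\tau),k(\tau))$ and independent of the past, so that their conditional means are still $m(a,k)$ and $\phi(a,k)$. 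A secondary point is verifying that the inequality in (\ref{eq:statm}) is preserved in the limit (it is, in fact as an equality). For the ergodic extension mentioned in Theorem~\ref{thm:min-power}, one would additionally replace $Pr[A(\tau)=a]=p_A(a)$ by its Cesàro limit, but the skeleton of the argument is unchanged.
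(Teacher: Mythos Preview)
Your argument is correct and follows essentially the same route as the paper: you form the empirical conditional compression frequencies $\gamma_{a,k}^{(i)}$ (equivalent to the paper's $\gamma_{a,k}(t_i) \defequiv \frac{1}{t_i}\sum_{\tau=0}^{t_i-1}Pr[k(\tau)=k\,|\,A(\tau)=a]$, since $Pr[A(\tau)=a]=p_A(a)$), rewrite the time averages as linear functions of these frequencies via iterated expectations, pass to a convergent subsequence by compactness of the simplex constraints (\ref{eq:gamma1})--(\ref{eq:gamma2}), and conclude that the limit point is feasible for Definition~\ref{def:1} with objective value $\overline{P}_c$. The paper's proof in Appendix~C is the same argument with only cosmetic differences in notation.
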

\begin{proof}
The proof is given in Appendix C.
\end{proof}

\begin{lem} \label{lem:appb-2} Suppose there are constants $\overline{\mu}$ and $\overline{P}_t$ together
with an infinite sequence of times $\{t_i\}_{i=1}^{\infty}$ such that:
\begin{eqnarray}
\lim_{t_i\rightarrow\infty} \frac{1}{t_i} \sum_{\tau=0}^{t_i-1} \expect{P_{tran}(\tau)} &=& \overline{P}_t \label{eq:appb-3}\\
\lim_{t_i\rightarrow\infty} \frac{1}{t_i} \sum_{\tau=0}^{t_i-1} \expect{\mu(\tau)} &=& \overline{\mu} \label{eq:appb-4}
\end{eqnarray}
Then $\overline{P}_t \geq g^*(\overline{\mu})$.
\end{lem}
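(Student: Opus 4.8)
The plan is to follow the same template as the proof of Lemma~\ref{lem:appb-1} in Appendix~C: along the given subsequence $\{t_i\}$, I would extract from the long-run time-averaged behavior of the arbitrary policy a single \emph{stationary randomized} power-allocation policy that achieves average power $\overline{P}_t$ and average transmission rate exactly $\overline{\mu}$, and then invoke Definition~\ref{def:2} of $g^*(\cdot)$.

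First I would exploit the i.i.d.\ structure of $S(t)$. For every slot $\tau$ the channel state $S(\tau)$ is independent of the queue backlog $U(\tau)$ and of all control decisions made before slot $\tau$; hence, conditioned on $S(\tau)=s$, the allocation $P_{tran}(\tau)$ is a random variable supported on $\script{P}$, and
\begin{eqnarray*}
\expect{P_{tran}(\tau)} &=& \sum_{s\in\script{S}} \pi_s\, \expect{P_{tran}(\tau)\,\big|\,S(\tau)=s} \\
\expect{\mu(\tau)} &=& \sum_{s\in\script{S}} \pi_s\, \expect{C(P_{tran}(\tau),s)\,\big|\,S(\tau)=s}
\end{eqnarray*}
For each $s$ with $\pi_s>0$, the pair $\big(\expect{P_{tran}(\tau)\,|\,S(\tau)=s},\,\expect{C(P_{tran}(\tau),s)\,|\,S(\tau)=s}\big)$ lies in $\Gamma_s \defequiv \conv{(P,C(P,s)):P\in\script{P}}$, which is a compact convex subset of $\mathbb{R}^2$ (the convex hull of the continuous image of the compact set $\script{P}$; compactness of the hull by Carath\'eodory's theorem in the plane). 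Averaging over $\tau\in\{0,\dots,t_i-1\}$ and using convexity of $\Gamma_s$, the per-state time averages $(\overline{P}_s^{(i)},\overline{\mu}_s^{(i)})$ lie in $\Gamma_s$, and by construction $\tfrac{1}{t_i}\sum_\tau\expect{P_{tran}(\tau)}=\sum_s\pi_s\overline{P}_s^{(i)}$ and $\tfrac{1}{t_i}\sum_\tau\expect{\mu(\tau)}=\sum_s\pi_s\overline{\mu}_s^{(i)}$.

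Next I would pass to the limit. Each sequence $\{\overline{P}_s^{(i)}\}_i$ and $\{\overline{\mu}_s^{(i)}\}_i$ is bounded (by $P_{max}$ and by $r_{max}$, respectively), so along a common subsequence of $\{t_i\}$ they all converge, say to $(\overline{P}_s,\overline{\mu}_s)$, which lies in $\Gamma_s$ by compactness. Hypotheses (\ref{eq:appb-3})--(\ref{eq:appb-4}) then force $\overline{P}_t=\sum_s\pi_s\overline{P}_s$ and $\overline{\mu}=\sum_s\pi_s\overline{\mu}_s$. Since $(\overline{P}_s,\overline{\mu}_s)\in\Gamma_s$, there is a randomized choice $P^*_s$ on $\script{P}$ with $\expect{P^*_s}=\overline{P}_s$ and $\expect{C(P^*_s,s)}=\overline{\mu}_s$; using $P^*_s$ whenever $S(t)=s$ defines a stationary randomized policy $P_{tran}^*(t)$ with $\expect{P_{tran}^*(t)}=\overline{P}_t$ and $\expect{C(P_{tran}^*(t),S(t))}=\overline{\mu}$. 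Because $\expect{\mu(\tau)}\le\expect{C(P_{max},S(\tau))}=r_{max}$ for every $\tau$, we have $\overline{\mu}\le r_{max}$, so $g^*(\overline{\mu})$ is well defined, and Definition~\ref{def:2} gives $g^*(\overline{\mu})\le\expect{P_{tran}^*(t)}=\overline{P}_t$, which is the claim.

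I expect the main obstacle to be the two limiting interchanges: showing that membership in the compact convex set $\Gamma_s$ survives first the Ces\`aro average over $\tau$ and then the subsequential limit over $i$ --- this is exactly where compactness of the convex hull of a compact planar set (Carath\'eodory) and the uniform boundedness of all per-state averages are essential --- and, secondarily, verifying that $\Gamma_s$ really captures every achievable pair $(\expect{P},\expect{C(P,s)})$ for randomized allocations on $\script{P}$. If $C(\cdot,s)$ is only upper semi-continuous one replaces $\Gamma_s$ by the closure of the hull, exactly as in the footnote after Definition~\ref{def:2}, and nothing else changes. For the general ergodic (non-i.i.d.) case one replaces the exact identity $\expect{\mu(\tau)}=\sum_s\pi_s\expect{\cdots\,|\,S(\tau)=s}$ by its $T$-slot averaged analogue, as indicated in the footnote of Section~\ref{section:formulation}.
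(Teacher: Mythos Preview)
Your proposal is correct and follows essentially the same route as the paper's proof: decompose the time averages per channel state, observe that the per-state averages $(\overline{P}_s(t),\overline{\mu}_s(t))$ lie in the convex hull $\Gamma_s$ of $\{(P,C(P,s)):P\in\script{P}\}$, invoke Carath\'eodory, and extract a stationary randomized policy meeting the constraints of Definition~\ref{def:2}. The only minor difference is in the limiting step---you use compactness of $\Gamma_s$ directly to place the limit point exactly in the hull, whereas the paper argues via an $\epsilon$-close policy and then perturbs with $P_{max}$ to push the rate back up to $\overline{\mu}$; your version is slightly cleaner but equivalent.
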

 \begin{proof}
The proof is given in Appendix C. 
 \end{proof}

 Now define $\overline{P}_{tot}$ as the $\limsup$ total power expenditure given by the
 left hand side of inequality (\ref{eq:want-to-show}).  Let $\tilde{t}_i$ be an infinite
  subsequence of times over which the $\limsup$ is achieved, so that:
  \begin{equation} \label{eq:total-sup}
  \lim_{\tilde{t}_i\rightarrow\infty} \frac{1}{\tilde{t}_i} \sum_{\tau=0}^{\tilde{t}_i-1} \expect{P_{comp}(\tau)
  + P_{tran}(\tau)}  = \overline{P}_{tot}
  \end{equation}
   Now define:
  \begin{eqnarray*}
  \overline{R}(t) = \frac{1}{t} \sum_{\tau=0}^{t-1} \expect{R(\tau)}  \:  \: , \: \:
  \overline{P}_{comp}(t) = \frac{1}{t} \sum_{\tau=0}^{t-1} \expect{P_{comp}(\tau)} \\
  \overline{P}_{tran}(t) = \frac{1}{t} \sum_{\tau=0}^{t-1} \expect{P_{tran}(\tau)} \: \: , \: \:
  \overline{\mu}(t) = \frac{1}{t} \sum_{\tau=0}^{t-1} \expect{\mu(\tau)}
  \end{eqnarray*}
  and note that for all timeslots $t$ we have:
  \begin{eqnarray*}
  0 \leq \overline{R}(t) \leq b\expect{A(t)}  \: \: , \: \: 0 \leq \overline{P}_{comp}(t) \leq \phi_{max} \\
  0 \leq \overline{P}_{tran}(t) \leq P_{max} \: \: ,  \: \: 0 \leq \overline{\mu}(t) \leq r_{max}
  \end{eqnarray*}
 It follows that  $(\overline{R}(\tilde{t}_i), \overline{P}_{comp}(\tilde{t}_i),
 \overline{P}_{tran}(\tilde{t}_i), \overline{\mu}(\tilde{t}_i))$ can be viewed as an infinite sequence
 contained in a four dimensional compact set, and thus has a convergent subsequence.
 Let $\{t_i\}$ represent the convergent subsequence of times, so that there exist
 constants $r$, $\overline{P}_c$, $\overline{P}_t$, and $\overline{\mu}$ such that:
  \begin{eqnarray*}
  \lim_{t_i\rightarrow\infty}  \frac{1}{t_i} \sum_{\tau=0}^{t_i-1} \expect{R(\tau)}  =  r \\
  \lim_{t_i\rightarrow\infty} \frac{1}{t_i} \sum_{\tau=0}^{t_i-1} \expect{P_{comp}(\tau)}  = \overline{P}_c \\
  \lim_{t_i\rightarrow\infty}  \frac{1}{t_i} \sum_{\tau=0}^{t_i-1} \expect{P_{tran}(\tau)} = \overline{P}_t \\
 \lim_{t_i\rightarrow\infty}  \frac{1}{t_i} \sum_{\tau=0}^{t_i-1} \expect{\mu(\tau)}  = \overline{\mu}
  \end{eqnarray*}
  Furthermore, because $\{t_i\}_{i=1}^{\infty}$ is an infinite
  subsequence of the original sequence $\{\tilde{t}_i\}$, we have from
  (\ref{eq:total-sup}) that $\overline{P}_c + \overline{P}_t = \overline{P}_{tot}$.
  From Lemmas \ref{lem:appb-1} and \ref{lem:appb-2} we must have the following:
  \begin{eqnarray*}
  \overline{P}_c &\geq& h^*(r) \\
  \overline{P}_t &\geq& g^*(\overline{\mu})
  \end{eqnarray*}
  Therefore:
  \begin{equation}
  \overline{P}_{tot} \geq h^*(r) + g^*(\overline{\mu})  \label{eq:viola1}
  \end{equation}

  We now use the fact that queue $U(t)$ is stable.  It is known that a stable queue must
  satisfy (see \cite{neely-energy-it} \cite{now}):
  \begin{equation} \label{eq:zz}
   \lim_{t\rightarrow\infty} \frac{\expect{U(t)}}{t} = 0
   \end{equation}
  However, note that for all times $t_i$ we have:
  \[ U(t_i) \geq \sum_{\tau=0}^{t_i - 1} R(\tau)- \sum_{\tau=0}^{t_i-1} \mu(\tau) \]
  This is true because the total unfinished work in the system at time $t_i$ is
  no more than the total bit arrivals minus the maximum possible bit departures
  during the interval from $0$ to $t_i -1$.  Therefore (taking an expectation and dividing by $t_i$):
  \[ \frac{\expect{U(t_i)}}{t_i} \geq \frac{1}{t_i} \sum_{\tau=0}^{t_i-1} \expect{R(\tau)} - \frac{1}{t_i} \sum_{\tau=0}^{t_i-1} \expect{\mu(\tau)} \]
  Taking a limit of the above expression as
  $t_i \rightarrow \infty$, and using (\ref{eq:zz}) yields $0 \geq r - \overline{\mu}$.  Therefore,
  queue stability implies that $r \leq \overline{\mu}$.  Because the function $g^*(r)$ is non-decreasing, it
  follows that  $g^*(r) \leq g^*(\overline{\mu})$.  Using this fact in (\ref{eq:viola1}) yields:
  \[ \overline{P}_{tot} \geq h^*(r) + g^*(r) \]
  Furthermore, it is not difficult to show that the values of $r$ and $\overline{\mu}$
  must satisfy $r_{min} \leq r \leq b\expect{A(t)}$ and
  $0 \leq \overline{\mu} \leq r_{max}$.
  Because $r \leq \overline{\mu}$, it follows that:
  \begin{equation} \label{eq:almost-done}
  r_{min} \leq r \leq \min[r_{max}, b\expect{A(t)}]
  \end{equation}
  Therefore, the value of $h^*(r) + g^*(r)$ is greater than or equal to the
  \emph{minimum} value of this quantity, minimized  over all $r$ that satisfy
  the constraint (\ref{eq:almost-done}), which is the definition of  $P_{av}^*$.
 Therefore:
 \[ \overline{P}_{tot} \geq h^*(r) + g^*(r) \geq P_{av}^* \]
 This proves Theorem \ref{thm:min-power}.

  \section*{Appendix C -- Proof of Lemmas \ref{lem:appb-1} and \ref{lem:appb-2}}

  \begin{proof} (Lemma \ref{lem:appb-1})
  Here we prove Lemma \ref{lem:appb-1}.  Suppose that (\ref{eq:lemmab1}) and (\ref{eq:lemmab2})
  hold. For all timeslots $t$, we have (by iterated expectations):
  \begin{eqnarray*}
  \expect{R(t)} &=& \expect{\expect{R(t)\left|\right.A(t), k(t)}}\\
  &=& \expect{m(A(t), k(t))}
  \end{eqnarray*}
  Similarly, we have $\expect{P_{comp}(t)} = \expect{\phi(A(t), k(t))}$ for all $t$.
  The equations (\ref{eq:lemmab1}) and (\ref{eq:lemmab2}) thus become:
  \begin{eqnarray}
  \lim_{t_i \rightarrow\infty} \frac{1}{t_i} \sum_{\tau=0}^{t_i-1} \expect{m(A(\tau), k(\tau))} &=& r \label{eq:foo1}\\
  \lim_{t_i \rightarrow\infty} \frac{1}{t_i} \sum_{\tau=0}^{t_i-1} \expect{\phi(A(\tau), k(\tau))} &=& \overline{P}_c \label{eq:foo2}
  \end{eqnarray}
  For any time $t$ we have:
  \begin{eqnarray*}
  && \hspace{-.3in} \frac{1}{t}\sum_{\tau=0}^{t-1} \expect{m(A(\tau), k(\tau))} \\
  &=& \sum_{a=0}^N\sum_{k\in\script{K}} \frac{1}{t} \sum_{\tau=0}^{t-1} m(a,k)p_A(a)Pr[k(\tau)=k\left|\right. A(\tau)=a]\\
  &=& \sum_{a=0}^N\sum_{k\in\script{K}} m(a,k) p_A(a)\gamma_{a,k}(t)
  \end{eqnarray*}
  where we define probabilities $(\gamma_{a,k}(t))$ as follows:
  \[ \gamma_{a,k}(t) \defequiv \frac{1}{t} \sum_{\tau=0}^{t-1} Pr[k(\tau)=k\left|\right.A(\tau)=a] \]
  Similarly, for any time $t$ we have:
  \begin{eqnarray*}
\frac{1}{t}\sum_{\tau=0}^{t-1} \expect{\phi(A(\tau), k(\tau))} =  \sum_{a=0}^N\sum_{k \in \script{K}} \phi(a,k)p_A(a)\gamma_{a,k}(t)
  \end{eqnarray*}
  It follows from (\ref{eq:foo1}) and (\ref{eq:foo2}) that:
  \begin{eqnarray}
   \lim_{t_i \rightarrow\infty} \sum_{a=0}^{N}\sum_{k\in\script{K}} p_A(a)m(a,k) \gamma_{a,k}(t_i) &=& r \label{eq:foo3} \\
    \lim_{t_i \rightarrow\infty} \sum_{a=0}^{N}\sum_{k\in\script{K}} p_A(a)\phi(a,k) \gamma_{a,k}(t_i) &=& \overline{P}_c \label{eq:foo4}
  \end{eqnarray}
 It is straightforward to show that the probabilities $(\gamma_{a,k}(t))$ satisfy the following
 constraints for all $t$:
 \begin{eqnarray}
 \gamma_{a,k}(t) \geq 0 \: \: \mbox{ for all $a, k$} \label{eq:compact1}\\
 \sum_{k\in\script{K}} \gamma_{a,k}(t) = 1 \: \: \mbox{ for all $a$}  \label{eq:compact2}
 \end{eqnarray}
 The above constraints imply that $(\gamma_{a,k}(t))$ can be viewed as a vector
 of values contained in a finite dimensional compact set for all $t$.
 It follows that $\{(\gamma_{a,k}(t_i))\}$ forms an infinite sequence of probability
 vectors contained in a compact set, and so there must exist a convergent subsequence
 of times $\{t_i'\}$ for which $\{(\gamma_{a,k}(t_i'))\}$ converges to a point
 $(\gamma_{a,k}^*)$ contained in the set.  Therefore:
 \begin{eqnarray}
 \gamma_{a,k}^* \geq 0 \: \: \mbox{ for all $a,k$}  \label{eq:endy1} \\
  \sum_{k\in\script{K}} \gamma_{a,k}^* = 1 \: \: \mbox{ for all $a$} \label{eq:endy2} \\
  \sum_{a=0}^N\sum_{k\in\script{K}} p_A(a)m(a,k) \gamma_{a,k}^* = r \label{eq:endy3} \\
  \sum_{a=0}^N\sum_{k\in\script{K}} p_A(a)\phi(a,k) \gamma_{a,k}^* = \overline{P}_c \label{eq:endy4}
  \end{eqnarray}
  where (\ref{eq:endy1}) and (\ref{eq:endy2}) follow because $(\gamma_{a,k}^*)$ is a limit
  point of the compact set defined by (\ref{eq:compact1}) and (\ref{eq:compact2}) and hence
  is an element of that set.  Equalities (\ref{eq:endy3}) and (\ref{eq:endy4}) follow
  because $\{t_i'\}$ is an infinite subsequence of the original sequence of times
  $\{t_i\}$, and hence the same limits in (\ref{eq:foo3}) and (\ref{eq:foo4}) are preserved
  when taken over this subsequence.

  Because $(\gamma_{a,k}^*)$ and $\overline{P}_c$ satisfy (\ref{eq:endy1})-(\ref{eq:endy4}),
  these values define a \emph{particular solution} for the constraints (\ref{eq:statphi})-(\ref{eq:gamma2})
  of the optimization problem of Definition \ref{def:1} in Section \ref{section:optimal-definition}.
  Therefore, $\overline{P}_c$ is greater than or equal to the infimum value of power, infimized
  over all solutions that satisfy these constraints, which is defined as $h^*(r)$. That is,
  $\overline{P}_c \geq h^*(r)$.  This completes the proof of Lemma \ref{lem:appb-1}.
  \end{proof}
  
  
  \begin{proof} (Lemma \ref{lem:appb-2})   Here we prove Lemma \ref{lem:appb-2}.
  Suppose that (\ref{eq:appb-3}) and (\ref{eq:appb-4}) hold.  Similar to the proof of
  Lemma \ref{lem:appb-1}, we can show that for any timeslot $t$:
  \begin{eqnarray}
  \frac{1}{t}\sum_{\tau=0}^{t-1}\expect{\mu(\tau)} &=& \sum_{s \in\script{S}} \pi_s \overline{\mu}_s(t) \label{eq:appc-1} \\
  \frac{1}{t}\sum_{\tau=0}^{t-1} \expect{P_{tran}(\tau)} &=& \sum_{s \in \script{S}} \pi_s \overline{P}_s(t)\label{eq:appc-2}
  \end{eqnarray}
  where $\overline{\mu}_s(t)$ and $\overline{P}_s(t)$ are defined for all $s \in \script{S}$ as follows:
  \begin{eqnarray*}
  \overline{\mu}_s(t) &\defequiv& \frac{1}{t} \sum_{\tau=0}^{t-1} \expect{C(P_{tran}(\tau), s) \left|\right. S(t) = s} \\
  \overline{P}_s(t) &\defequiv& \frac{1}{t} \sum_{\tau=0}^{t-1} \expect{P_{tran}(\tau)\left|\right. S(t) = s}
  \end{eqnarray*}
  For each timeslot $t$ and each channel state
  $s \in \script{S}$, the values $(\overline{\mu}_s(t), \overline{P}_s(t))$ defined above
  are in the convex hull of the set $\Omega_s$ defined below:
  \[ \Omega_s \defequiv \{ (\mu, p) \left|\right. p \in \script{P} \:  \mbox{ and } \:  \: \mu = C(p, s)\}  \]
  The set $\Omega_s$ is 2-dimensional.  It follows by Caratheodory's theorem \cite{bertsekas-convex}
  that any element $(\overline{\mu}_s(t), \overline{P}_s(t))$ contained in the convex hull of
  $\Omega_s$ can be expressed as a convex combination of at most $3$ elements of $\Omega_s$.
  Thus, there exist powers $P_{s,z}(t) \in \script{P}$ and probabilities $\alpha_{s,z}(t)$ such that:
  \begin{equation} \label{eq:appc3}
   (\overline{\mu}_s(t), \overline{P}_s(t))  = \sum_{z=1}^3 \alpha_{s,z}(t) (C(P_{s,z}(t), s), P_{s,z}(t))
   \end{equation}
where $\sum_{z=1}^3 \alpha_{s,z}(t)  = 1$ for all $s, t$.

Using (\ref{eq:appc3}) and (\ref{eq:appc-1}),(\ref{eq:appc-2}), the limit equations of (\ref{eq:appb-3})
and (\ref{eq:appb-4}) become:
\begin{eqnarray*}
\lim_{t_i\rightarrow\infty}  \sum_{s\in\script{S}} \sum_{z=1}^3 \pi_s \alpha_{s,z}(t_i) C(P_{s,z}(t_i), s) &=& \overline{\mu} \\
\lim_{t_i \rightarrow\infty} \sum_{s\in\script{S}} \sum_{z=1}^3 \pi_s \alpha_{s.z}(t_i) P_{s,z}(t_i) &=& \overline{P}_t
\end{eqnarray*}

 It follows that for any $\epsilon>0$, there exists a stationary randomized policy for choosing
 $P_{tran}(t)$ as a random function of the observed channel state $S(t)$ such that:
 \begin{eqnarray*}
 \expect{C(P_{tran}(t), S(t))} &\geq& \overline{\mu} - \epsilon \\
 \expect{P_{tran}(t)} &\leq& \overline{P}_t + \epsilon
 \end{eqnarray*}
  This stationary policy can be modified to create another stationary randomized policy
  that has average transmission rate greater than or equal to $\overline{\mu}$ simply by independently
  choosing $P_{tran}(t)  = P_{max}$ every timeslot with some small probability.  Thus,
  for any value $\delta>0$, there exists a stationary randomized policy for
  choosing $P_{tran}(t)$ that yields:
   \begin{eqnarray*}
 \expect{C(P_{tran}(t), S(t))} &\geq& \overline{\mu}  \\
 \expect{P_{tran}(t)} &\leq& \overline{P}_t + \delta
 \end{eqnarray*}
It follows that $\expect{P_{tran}(t)}$ in the above policy satisfies
$\expect{P_{tran}(t)} \geq g^*(\overline{\mu})$, because $g^*(\overline{\mu})$ is
defined as the smallest average power over the class of stationary randomized
algorithms that support an average transmission rate of at least $\overline{\mu}$
(see Definition \ref{def:2} in Section \ref{section:optimal-definition}).
 Therefore:
 \[ g^*(\overline{\mu}) \leq \expect{P_{tran}(t)} \leq \overline{P}_t + \delta \]
 The above inequality holds for all $\delta >0$, and so $g^*(\overline{\mu}) \leq \overline{P}_t$,
 which completes the proof of Lemma \ref{lem:appb-2}.
  \end{proof}

\section*{Appendix D -- Proof of Theorem \ref{thm:distortion-performance}}

Here we prove that the Distortion Constrained Compression and Transmission Algorithm
yields performance as given in Theorem \ref{thm:distortion-performance}. Because
the algorithm observes the current network state and 
makes control decisions $k(t) \in \script{K}$, $P_{tran}(t)\in\script{P}$
that minimize the right hand side of the drift bound given in Lemma \ref{lem:distortion-drift}, 
we have: 
\begin{eqnarray}
&& \hspace{-.7in} \Delta(\bv{Z}(t)) + V\expect{P_{tot}(t) \left|\right. \bv{Z}(t) } \leq C \nonumber \\
&&  - U(t)\expect{C(P_{tran}^*(t), S(t)) \left|\right.\bv{Z}(t)} \nonumber \\
&& + U(t) \expect{m(A(t), k^*(t))\left|\right. \bv{Z}(t)} \nonumber \\
&&  - X(t)\expect{d_{av} - d(A(t), k^*(t)) \left|\right.\bv{Z}(t)} \nonumber \\
&& + V\expect{P_{tran}^*(t) + \phi(A(t), k^*(t)) \left|\right.\bv{Z}(t)} \label{eq:appd}
\end{eqnarray}
where $k^*(t) \in \script{K}$ and $P_{tran}^*(t) \in \script{P}$ are any other feasible
control actions for slot $t$.   We obtain bounds on $\overline{U}$, $\overline{P}_{tot}$, 
and $\overline{D}$ using three different $k^*(t)$ and $P_{tran}^*(t)$ policies.

\begin{itemize} 
\item ($\overline{U}$ Analysis): Let $P_{tran}^*(t) = P_{max}$, and let 
$k^*(t)$ be the stationary randomized policy that makes decisions independently
of the current queue state, and yields the minimum output rate $r_{d, min}$ from
the compressor, subject to the distortion constraint: 
\begin{eqnarray*}
\expect{m(A(t), k^*(t))} &=& r_{d, min} \\
\expect{d(A(t), k^*(t))} &\leq& d_{av} 
\end{eqnarray*}
Plugging this into (\ref{eq:appd}) yields: 
\begin{eqnarray*}
&& \hspace{-.3in} \Delta(\bv{Z}(t)) + V\expect{P_{tot}(t) \left|\right. \bv{Z}(t) } \leq \\
&& C - U(t)[r_{max} - r_{d, min}] + V[P_{max} + \phi_{max}]
\end{eqnarray*}
and hence: 
\begin{eqnarray*}
&& \hspace{-.35in} \Delta(\bv{Z}(t)) \leq 
C - U(t)[r_{max} - r_{d, min}] + V[P_{max} + \phi_{max}]
\end{eqnarray*}
Using this drift inequality directly in the Lyapunov Drift Lemma (Lemma \ref{lem:lyap-drift})
yields the bound on $\overline{U}$ given in (\ref{eq:u-dist-bound}). 

\item ($\overline{P}$ Analysis): Let $P_{tran}^*(t)$ and $k^*(t)$ be the stationary randomized
algorithms that choose actions independently of queue backlog and yield: 
\begin{eqnarray*}
\expect{C(P_{tran}^*(t), S(t))} &=& r^* \\
\expect{P_{tran}^*(t)} &=& g^*(r^*) \\
\expect{m(A(t), k^*(t))} &=& r^* \\
\expect{\phi(A(t), k^*(t))} &=& h_d^*(r^*)  \\
 \expect{d(A(t), k^*(t))} &\leq& d_{av} 
\end{eqnarray*}
where $r^*$ is the optimal solution of problem (\ref{eq:problem-dist}), satisfying: 
\[ h_d^*(r^*) + g^*(r^*) = P_{av}^* \]
Plugging this into (\ref{eq:appd}) yields: 
\begin{eqnarray*}
 \Delta(\bv{Z}(t)) + V\expect{P_{tot}(t) \left|\right. \bv{Z}(t) } \leq 
 C + VP_{av}^*
\end{eqnarray*}
Using this drift inequality in the Lyapunov Drift Lemma (Lemma \ref{lem:lyap-drift}) 
yields the $\overline{P}_{tot}$ bound of (\ref{eq:p-dist-bound}). 

\item ($\overline{D}$ Analysis): Let $P_{tran}^*(t) = P_{max}$ and 
let $k^*(t)$ be any stationary randomized policy that is independent
of queue backlog and that yields: 
\begin{eqnarray*}
\expect{m(A(t), k^*(t))} &\leq& r_{max} \\
\expect{d(A(t), k^*(t))} &=& d_{av} - \epsilon
\end{eqnarray*}
for some value $\epsilon>0$. 
Plugging this into (\ref{eq:appd}) yields: 
\begin{eqnarray*}
&& \hspace{-.3in} \Delta(\bv{Z}(t)) + V\expect{P_{tot}(t) \left|\right. \bv{Z}(t) } \leq \\
&& C - X(t)\epsilon + V[P_{max} + \phi_{max}]
\end{eqnarray*}
and hence: 
\[  \Delta(\bv{Z}(t)) \leq  C - X(t)\epsilon + V[P_{max} + \phi_{max}] \]
Using this drift inequality directly  in the Lyapunov Drift Lemma (Lemma 
\ref{lem:lyap-drift}) 
yields: 
\[ \limsup_{t\rightarrow\infty}\frac{1}{t}\sum_{\tau=0}^{t-1} \expect{X(\tau)} \leq \frac{C + V[P_{max} + \phi_{max}]}{\epsilon} \]
It follows that the virtual queue $X(t)$ is strongly stable.  Because it has a finite maximum
departure rate $d_{av}$, the time average expected arrival rate to $X(t)$
(given by $\overline{D}$) is less than or equal to the time average expected transmission
rate (given by $d_{av}$) \cite{neely-energy-it} \cite{now}.  This proves (\ref{eq:d-dist-bound}). 
\end{itemize} 

\section*{Appendix E -- Derivation of $U_{thresh}$ for the Logarithmic Rate-Power Curve  Model}
The logarithmic model has $C(P) = \alpha \log(1 + \beta P)$ (using a natural log), with 
$\mu_{max} = \alpha\log(1 + \beta P_{max})$.  The dynamic transmission algorithm
solves: 
\begin{eqnarray}
\mbox{Maximize:} & U(t)\alpha\log(1 + \beta P) - VP  \label{eq:maximize} \\
\mbox{Subject to:} & 0 \leq P \leq P_{max} \nonumber
\end{eqnarray}
The largest value of  $U_{thresh}$ that satisfies Property 1 is given by $U_{thresh} = \max[0, \theta]$, 
where $\theta$ is the  minimum value for the following optimization problem: 
\begin{eqnarray}
\mbox{Minimize:} & \theta = U - \mu^*(U) \label{eq:uthresh-log-1} \\
\mbox{Subject to:} & \frac{V}{\alpha\beta} \leq U \leq \frac{V}{\alpha\beta} + \frac{VP_{max}}{\alpha} \nonumber
\end{eqnarray}
where $\mu^*(U) = \alpha\log(1 + \beta P^*(U))$ and $P^*(U)$ is the optimum solution to 
(\ref{eq:maximize}) for $U(t) = U$, given by: 
\[ P^*(U) = \left[\frac{U\alpha}{V} - \frac{1}{\beta}\right]_0^{P_{max}} \]
where the operator $[x]_0^{y}$ is defined $ [x]_0^{y} \defequiv \max[0, \min[x, y]]$.  This can be understood as follows: 
The value $U - \mu^*(U)$ is the queue backlog after transmission when $U(t) = U$.  If 
$U(t) \leq V/(\alpha\beta)$ then $P(t) = 0$ and $\mu(t) = 0$ (so queue backlog cannot further 
decrease)
while if $U(t) \geq V/(\alpha\beta) + VP_{max}/\alpha$ then $P(t) = P_{max}$ and $\mu(t) = \mu_{max}$, so the queue cannot drop below the value it would drop to  if $U(t) = 
V/(\alpha\beta) + VP_{max}/\alpha$. 

The problem (\ref{eq:uthresh-log-1}) reduces to: 
\begin{eqnarray*}
\mbox{Minimize:} & U - \alpha\log(U\alpha\beta/V) \\
\mbox{Subject to:} & \frac{V}{\alpha\beta} \leq U \leq \frac{V}{\alpha\beta} + \frac{VP_{max}}{\alpha} 
\end{eqnarray*}
The critical points of the above problem appear at the two endpoints of the interval
and at the point $U= \alpha$ (if this point is inside the interval).  
We thus have: 
\begin{equation} 
U_{thresh} = \left\{\begin{array}{ll} 
\max[0, \alpha - \alpha \log(\alpha^2\beta/V)] & \mbox{ if $\frac{V}{\alpha\beta} \leq \alpha \leq \frac{V}{\alpha\beta} + \frac{VP_{max}}{\alpha}$} \\
\max\left[0, \min\left[\frac{V}{\alpha\beta}, \frac{V}{\alpha\beta} + \frac{VP_{max}}{\alpha} - \mu_{max}\right]\right] & \mbox{ else} 
                            \end{array}
                                 \right. \label{eq:uthresh-log}
\end{equation}

\bibliographystyle{acmtrans}
\bibliography{../../../../../latex-mit/bibliography/refs}
\end{document}